\documentclass[11pt]{amsart}
\usepackage[title]{appendix}
\usepackage[a4paper,top=3cm,bottom=2cm,left=3cm,right=3cm,marginparwidth=1.75cm]{geometry}
\usepackage[noabbrev,capitalise]{cleveref}
\usepackage{amsmath,amssymb,amsthm,amsfonts,mathrsfs,mathtools,relsize}
\usepackage{xcolor}
\usepackage{thm-restate}

\title{Typical structure of hereditary properties of binary matroids}
\author{Stefan Grosser}
\address{Department of Mathematics and Statistics, McGill University, Montreal, QC.}
\email{stefan.grosser@mail.mcgill.ca}
\author{Hamed Hatami}
\address{School of Computer Science, McGill University, Montreal, QC.}
\email{hatami@cs.mcgill.ca}
\author{Peter Nelson}\address{Department of Combinatorics and Optimization, University of Waterloo, Waterloo, Canada}
\email{apnelson@uwaterloo.ca}
\author{Sergey Norin}
\address{Department of Mathematics and Statistics, McGill University, Montreal, QC.}
\email{snorin@math.mcgill.ca}

\thanks{HH, PN and SN were supported by NSERC Discovery Grants. PN was additionally supported by an Early Researcher Award from the government of Ontario. SG received support from Fonds de Recherche du Qu\'ebec Nature et Technologies.}

\newtheorem{thm}{Theorem}[section]

\newtheorem{definition}[thm]{Definition}
\newtheorem{lem}[thm]{Lemma}
\newtheorem{cor}[thm]{Corollary}

\newtheorem{proposition}[thm]{Proposition}

\theoremstyle{remark}


\newcommand{\cP}{\mathcal{P}}

\newcommand{\e}{\mathbb{E}}

\usepackage[shortlabels]{enumitem}

\newcommand{\mc}[1]{\mathcal{#1}}

\newcommand{\bb}[1]{\mathbb{#1}}

\newcommand{\brm}[1]{\operatorname{#1}}

\newcommand{\eps}{\varepsilon}

\newcommand{\s}[1]{\left(#1\right)}

\newcommand{\Sub}{\brm{Sub}}

\begin{document}

\begin{abstract}
We prove an arithmetic analogue of the typical structure theorem for graph hereditary properties due to Alon, Balogh, Bollob\'as and Morris~\cite{ABBM11}.
\end{abstract}
\maketitle

\section{Introduction}

Recently developed theory of higher order Fourier analysis (see e.g. monographs by Tao~\cite{Tao12} and Hatami, Hatami and Lovett~\cite{HHL19}) provides a robust framework for establishing arithmetic analogues of results in extremal graph theory. In this framework, decomposition theorems for functions over $\bb{F}^n_p$~\cite{BFHHL13,Gowers10,Green05} play a role analogous to Szemeredi's regularity lemma~\cite{Szemeredi75}. 

The parallels are particularly natural in the setting of binary matroids. A \emph{simple binary matroid} (henceforth a \emph{matroid}) is a function $M : V(M) \to \{0,1\}$ where $V(M)$ is a binary projective space, i.e. $V(M)  \simeq PG(n-1,2) = \bb{F}_2^n \setminus \{0\}$ for some $n \in \bb{N}$.\footnote{In the comparisons we make with results in graph theory, the space $V$ is analogous to the vertex set of a graph, and the function $M$ is analogous to the indicator function of the edge set.} The above mentioned decomposition theorems were used  in this setting, in particular, to establish an analogue of the classical Erd\H{o}s-Stone theorem and a corresponding stability theorem~\cite{LLNN20}, and to explore the structure of dense matroids in monotone properties~\cite{Campbell16,GeeNel16,Luo19}. 

We focus our attention on hereditary matroid properties.
A \emph{matroid property} is a set of matroids closed under isomorphism, where a matroid isomorphism between $M_1$ and $M_2$ is an invertible linear map $\phi: \bb{F}_2^n  \to \bb{F}_2^n$  such that  $M_1 = M_2 \circ \phi $. A matroid property is \emph{hereditary} if it is further closed under restriction to linear subspaces. Meanwhile, a \emph{graph property} is a set of graphs closed under isomorphism. A graph property is \emph{hereditary} if it is closed under taking induced subgraphs. 

One  striking analogy between hereditary properties of graphs and matroids comes from the area of property testing. A seminal result of Alon and Shapira~\cite{AloSha08} establishes that a graph property has an oblivious one-sided error tester if and only if it is (semi)-hereditary. Bhattacharyya, Grigorescu and Shapira~\cite{BGS15} conjectured the exact analogue of this result for binary matroids.\footnote{A different vocabulary is used in the property testing, and hereditary matroid properties are referred to as \emph{linear-invariant subspace hereditary properties of boolean functions.}} Following a series of partial results~\cite{BGS15,BFL12,BFHHL13}, this  conjecture has recently been established by Tidor and Zhao~\cite{TidZha19}.

We pursue the analogy between hereditary properties of graphs and  binary matroids
from a different angle, focusing on the typical structure.
The study of typical structure of graphs in a given hereditary property was initiated by Erd\"os, Kleitman and Rothschild~\cite{EKR76}, and it has been extensively investigated since. Initial focus was primarily on the typical structure of \emph{principal} hereditary properties consisting of all $H$-free graphs for a fixed graph $H$, where a graph $G$ is \emph{$H$-free} if $G$ does not contain an induced subgraph isomorphic to $H$. For example,  Kolaitis, Pr\"{o}mel and Rothschild~\cite{KPR87} proved that almost all $K_{k+1}$-free graphs are $k$-colorable, extending the result of \cite{EKR76} for $k=2$. Pr\"{o}mel and Steger~\cite{PS91} proved that vertices of almost all $C_4$-free graphs can be partitioned into a clique and a stable set. 

A particularly general theorem in this direction has been established by Alon, Balogh, Bollob\'as and Morris~\cite{ABBM11}. Our matroid results are modeled on their theorem, which we now introduce in detail.

\subsection*{Alon-Balogh-Bollob\'as-Morris theorem }
Informally, the main result of~\cite{ABBM11}  says that for every hereditary graph property $\mc{P}$, there exists a constant $k$ such that for almost every graph in $G \in \mc{P}$, the vertices of $G$ can be divided into $k$ parts with negligible leftover
so that the subgraphs induced by the parts are structured, whereas the edges between the parts are essentially arbitrary. 

We now formalize the above. Let $[n]=\{1,2,\ldots,n\}$.
For a  graph property $\mc{P}$, let $\mc{P}^n$ denote the set of graphs $G \in \mc{P}$ with $|V(G)|=[n]$.
We say that property $\mc{P}_*$ holds for \emph{almost all graphs} in $\mc{P}$, if $$\lim_{n \to \infty}\frac{|\mc{P}^n \cap \mc{P}_{*}|}{|\mc{P}^n|} = 1.$$ 
 
The constant $
k$ which appears in the above informal description is the coloring number of $\mc{P}$, defined as follows. Given integers $s,t \geq 0$, let $\mc{H}(s,t)$ denote the family of all graphs $G$ such that $V(G)$ admits a partition into $s$ cliques and $t$ independent sets. In particular, $\mc{H}(s,0)$ is the family of all $s$-colorable graphs. A graph property is  \emph{non-trivial} if it contains arbitrarily large graphs. The \emph{coloring number} $\chi_c(\mc{P})$ of a non-trivial hereditary graph property $\mc{P}$ is the maximum integer $k$ such that $\mc{H}(s,k-s) \subseteq \mc{F}$ for some $0 \leq s \leq k$.

The \emph{entropy of $\mc{P}$}, which naturally measures the size of  $\mc{P}$,  is the sequence $h(n,\mc{P}) = \log|\mc{P}^n|$, where here and throughout the paper all logarithms are base $2$.   Alekseev~\cite{Alekseev92} and, independently, Bollob\'{a}s and Thomason~\cite{BolTho97}, gave an asymptotic expression for the entropy of hereditary properties in terms of their coloring number, showing that
\begin{equation}\label{e:Gentropy}
h(n,\mc{P})  =  \left(1 - \frac{1}{\chi_c(\mc{P})} + o(1)\right)\frac{n^2}{2}.
\end{equation}
The above expression gives the correct asymptotic order of the entropy, unless $\chi_c(\mc{P}) \leq 1$, in which case   it only tells us that $\mc{P}$ is subquadratic.  

We say $\mc{P}$ is \emph{thin} if  $\chi_c(\mc{P}) \leq 1$. Note that, by definition of $\chi_c(\mc{P})$, a hereditary graph property $\mc{P}$ is thin if and only if $\mc{P}$ does not contain all the bipartite graphs, nor all the complements of bipartite graphs, nor all the split graphs, where a graph is \emph{split} if its vertex set can be partitioned into a clique and a stable set. Thin properties have much lower entropy than non-thin ones, and thus  we  shall think of the graphs in a fixed thin property to be structured. For example,   in   our informal description of Alon-Balogh-Bollob\'as-Morris's theorem, it is in this sense that  the subgraphs induced by each of the $k$ parts are structured.

Alon et al. give an equivalent description of thin graph properties in terms of forbidden bigraphs. A \emph{bigraph} is a triple $H_{A,B}=(H,A,B)$, where $H$ is a bipartite graph and $(A,B)$ is a bipartition of $H$.  
An injection $\phi: V(H) \to V(G)$  is \emph{an  $H_{A,B}$-instance  in a graph $G$} if for  all $u \in A$ and $v \in B$ we have $uv \in E(H)$ if and only if $\phi(u)\phi(v) \in E(G)$. We say that a graph $G$ is \emph{$H_{A,B}$-free} if there are no $H_{A,B}$-instances in $G$. We are now also ready  to formally state the main structural results of~\cite{ABBM11}. 

\begin{thm}[{\cite[Theorem 2, Lemma 7]{ABBM11}}]\label{t:Gthin} Let $\mc{T}$ be a non-trivial hereditary graph property.  The following are equivalent: 
	\begin{itemize}
		\item $\mc{T}$ is thin (i.e. $\chi_c(\mc{P}) \leq 1$),
		\item there exists a bigraph $H_{A,B}$ such that every graph in $\mc{T}$ is $H_{A,B}$-free,
		\item there exists $\eps > 0$ such that $ h(n,\mc{T}) \leq n^{2-\eps}$ for all $n$.
	\end{itemize}	
\end{thm}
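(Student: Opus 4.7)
The plan is to close the cycle $\mathrm{(iii)} \Rightarrow \mathrm{(i)} \Rightarrow \mathrm{(ii)} \Rightarrow \mathrm{(iii)}$.

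The direction $\mathrm{(iii)} \Rightarrow \mathrm{(i)}$ is a quick contrapositive. If $\chi_c(\mc{T}) \geq 2$, then by definition $\mc{T}$ contains one of the three families $\mc{H}(0,2)$ (all bipartite graphs), $\mc{H}(1,1)$ (all split graphs), or $\mc{H}(2,0)$ (all co-bipartite graphs), and each of these has at least $2^{\lfloor n^2/4\rfloor}$ labeled representatives on $[n]$ (fix a balanced bipartition and vary the edges between the parts freely). Hence $h(n,\mc{T}) \geq \lfloor n^2/4\rfloor$, which is incompatible with any bound of the form $n^{2-\eps}$ for $\eps>0$.

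For $\mathrm{(i)} \Rightarrow \mathrm{(ii)}$, thinness supplies graphs $F_1, F_2, F_3 \notin \mc{T}$ that are respectively bipartite, split and co-bipartite; by heredity no $G \in \mc{T}$ contains any of them as an induced subgraph. Each $F_i$ carries a canonical ``between-part'' bipartite graph $E_{F_i}$ (the edges of $F_i$ crossing its chosen two-class partition). I would build the desired bigraph $H_{A,B}$ as a sufficiently large bipartite graph that is induced-bipartite-universal on moderate-sized parts: for every $X' \subseteq A$ and $Y' \subseteq B$ of some threshold size $k$, the induced bipartite graph $H[X',Y']$ contains each of $E_{F_1}, E_{F_2}, E_{F_3}$ as an induced sub-bipartite graph. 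Such an $H$ is produced by a random bipartite graph or an induced bipartite Ramsey construction. Given any $H_{A,B}$-instance in a graph $G$ with image sets $X,Y$, classical graph Ramsey applied inside $G[X]$ and $G[Y]$ produces subsets $X' \subseteq X$ and $Y' \subseteq Y$ of size $k$ that are each either a clique or an independent set of $G$. In each of the four homogeneity patterns the induced subgraph $G[X' \cup Y']$ is bipartite, split, or co-bipartite, and the universality of $H$ forces an induced copy of the appropriate $F_i$ inside $G$, contradicting $G \in \mc{T}$.

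The main obstacle is the counting direction $\mathrm{(ii)} \Rightarrow \mathrm{(iii)}$: the hypothesis of a single forbidden bigraph $H_{A,B}$ controls only one bipartite pattern between disjoint $(a,b)$-sized vertex sets, yet one must derive $|\mc{T}^n| \leq 2^{n^{2-\eps}}$. I would proceed via an encoding-plus-recursion argument in the spirit of K\H{o}v\'ari--S\'os--Tur\'an. For $G \in \mc{T}^n$, Ramsey's theorem yields a homogeneous set $I$ of polylogarithmic size; say WLOG $I$ is independent. The $H_{A,B}$-freeness of $G$ forbids a specific induced bipartite pattern between $I$ and $V(G) \setminus I$, and a Zarankiewicz/VC-dimension type estimate shows that the vertices outside $I$ admit only polynomially many distinct neighborhood traces on $I$; consequently the bipartite graph between $I$ and $V(G) \setminus I$ is describable in $O(n \log n)$ bits. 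The remainder $G[V(G) \setminus I]$ is still $H_{A,B}$-free and so one recurses, with careful bookkeeping of recursion depth and per-level overhead yielding the subquadratic bound. The hard part is making the trace estimate quantitative and uniform over every possible choice of the bigraph $H_{A,B}$, and this is precisely the content of Lemma~7 of \cite{ABBM11}.
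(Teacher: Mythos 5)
This theorem is stated in the paper only as a cited result from Alon, Balogh, Bollob\'as and Morris; the paper gives no proof of it, so there is no ``paper's own proof'' to compare your attempt against. Your argument for $\mathrm{(iii)} \Rightarrow \mathrm{(i)}$ is correct, and you correctly identify that the burden of the equivalence is a structural step producing the forbidden bigraph together with a counting step.

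The $\mathrm{(i)} \Rightarrow \mathrm{(ii)}$ argument, however, has a genuine gap: the bipartite graph $H$ you describe cannot exist. You want $H$ on parts $A,B$ of size $N$ such that for \emph{every} $X'\subseteq A$, $Y'\subseteq B$ with $|X'|=|Y'|=k$, the induced sub-bigraph $H[X',Y']$ contains each $E_{F_i}$ as an induced sub-bigraph, and you then invoke Ramsey inside the images $\phi(A),\phi(B)$. For Ramsey to yield homogeneous sets of size $k$ you need $N \ge R(k,k) \ge 2^{k/2}$. But (in all but the degenerate situation where every $E_{F_i}$ is complete bipartite, or every one is empty) such universality forces every $H[X',Y']$ to be neither complete nor empty, i.e.\ $H$ and its bipartite complement $\bar H$ are both $K_{k,k}$-free. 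K\H{o}v\'ari--S\'os--Tur\'an then gives $e(H), e(\bar H) \le \tfrac12 (k-1)^{1/k} N^{2-1/k}(1+o(1))$, and summing yields $N \le (k-1)(1+o(1))$, which is incompatible with $N\ge 2^{k/2}$ once $k\ge 3$. So the ``induced-bipartite-universal on all $(k,k)$-subsets'' object you want to build by a random (or any other) construction does not exist at the scale the Ramsey step requires. ABBM's Lemma~7 must therefore extract the forbidden bigraph by a different route (they do not pass through a fully universal $H$), and this direction cannot be repaired by tuning the constants in your construction.

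Your $\mathrm{(ii)} \Rightarrow \mathrm{(iii)}$ paragraph is only a sketch and, as written, also falls short of the claimed bound: extracting a homogeneous set of merely polylogarithmic size each round and paying $\Theta(n\log n)$ bits per round gives at best $o(n^2)$ rather than $n^{2-\eps}$. To reach a genuine power saving one needs homogeneous sets of polynomial size $n^{\delta}$ inside $H_{A,B}$-free graphs (an Erd\H{o}s--Hajnal-type statement, which in this setting follows from the bounded-VC / no-large-half-graph structure that $H_{A,B}$-freeness imposes) before running the trace-counting recursion. You acknowledge deferring this step to the reference, which is reasonable, but the sketch as stated does not itself deliver $n^{2-\eps}$.
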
	

\begin{thm}[{\cite[Theorem 1]{ABBM11}}]\label{t:Gmain}
	Let $\cP$ be a non-trivial hereditary graph property with coloring number $\chi_c(\cP) = k$. There exists an $\eps> 0$ and a thin hereditary property $\mc{T}$ such that	
	for almost all graphs $G \in \cP$, there exists a partition $(S_1, \ldots, S_k, Z)$ of $V(G)$ satisfying
	\begin{itemize}
		\item $G[S_i] \in \mc{T}$ for every $1 \leq i \leq k$, and
		\item $|Z| \leq n^{1 - \eps}$.
	\end{itemize}
\end{thm}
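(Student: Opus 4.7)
The plan is to combine the asymptotic entropy estimate~\eqref{e:Gentropy}, the characterization of thin properties in Theorem~\ref{t:Gthin}, and a stability argument to pin down the structure of a typical $G \in \cP^n$. First I would fix $0 \le s \le k$ with $\mc{H}(s, k-s) \subseteq \cP$ and define a candidate thin property $\mc{T}$ as the set of all graphs $H$ such that blowing up any vertex of a large member of $\mc{H}(s, k-s)$ by $H$ still yields a member of $\cP$. Maximality of $k = \chi_c(\cP)$ forces $\mc{T}$ to be thin: if $\mc{T}$ contained all bipartite, co-bipartite, or split graphs, then substituting such graphs into one part of the template would witness $\chi_c(\cP) \ge k+1$. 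Theorem~\ref{t:Gthin} then supplies a forbidden bigraph for $\mc{T}$ together with the polynomial entropy bound $h(n,\mc{T}) \le n^{2-\eps_0}$ for some $\eps_0 > 0$.

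Next I would apply Szemer\'edi's regularity lemma to a typical $G \in \cP^n$ and argue that its reduced graph is, up to a small correction, a complete $k$-partite blow-up matching the $\mc{H}(s,k-s)$-template. The lower bound $|\cP^n| \ge 2^{(1-1/k)n^2/2 - o(n^2)}$, obtained by freely blowing up $\mc{H}(s,k-s)$ and inserting thin graphs of entropy $n^{2-\eps_0}$ into each part, combined with a counting argument shows that graphs whose regularity partition deviates from the template by even a single cluster-pair form an exponentially small fraction of $\cP^n$: any such deviation costs a constant fraction of the edge-entropy, whereas the remaining edges contribute at most $(1-1/k)n^2/2$. I would then assign each vertex $v$ to the part $S_i$ dictated by the role of its cluster in the template, and place into $Z$ those vertices whose neighborhood pattern either violates regularity with too many clusters or is incompatible with any part. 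The exponential entropy deficit enables a per-vertex union bound giving $|Z| \le n^{1-\eps}$ for some $\eps > 0$.

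Finally, I would verify $G[S_i] \in \mc{T}$ for each $i$. If some $G[S_i]$ failed to lie in $\mc{T}$, then by heredity of $\cP$ and the definition of $\mc{T}$ one could modify $G$ within $\cP$ to violate the template structure of the preceding step; a further counting argument then bounds the number of such $G$ by $2^{(1-1/k)n^2/2 - \Omega(n^{2-\eps_0})}$, a negligible fraction of $|\cP^n|$. The main obstacle is the second step: proving a stability refinement of~\eqref{e:Gentropy} strong enough to yield $|Z| \le n^{1-\eps}$ rather than merely $o(n)$. This requires propagating the polynomial saving $n^{2-\eps_0}$ for thin properties through the regularity framework all the way to individual vertices, which is where the quantitative equivalence in Theorem~\ref{t:Gthin} between thinness and polynomial entropy becomes essential.
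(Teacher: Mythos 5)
This is Theorem~1 of Alon, Balogh, Bollob\'as and Morris~\cite{ABBM11}, which the present paper cites as background and does not prove; the paper proves only the matroid analogues (Theorems~\ref{t:thin}, \ref{t:main}, \ref{t:local}). There is therefore no proof in this paper to compare against, and your proposal must be judged as a sketch of the argument in~\cite{ABBM11}.

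Your outline captures the right ingredients --- regularity, entropy counting, and the thinness characterization of Theorem~\ref{t:Gthin} --- but two points would not survive contact with the details. First, the claim that the reduced graph of a typical $G \in \cP^n$ is ``up to a small correction, a complete $k$-partite blow-up'' is wrong and in fact contradicts the entropy count that you correctly invoke a sentence later. Membership in $\mc{H}(s,k-s)$ says nothing about the bipartite graphs between the $k$ parts; it is precisely the freedom of those $\binom{k}{2}(n/k)^2$ cross-pairs that produces the $(1-1/k)\frac{n^2}{2}$ term. The reduced graph one should expect is: clusters grouped into $k$ classes, density close to $0$ or $1$ \emph{within} a class (since $\mc{T}$ is thin), and density bounded away from $0$ and $1$ --- ideally near $1/2$ --- \emph{between} classes. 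Reading ``complete $k$-partite'' literally would kill all the cross-pair entropy and leave a family of only $2^{o(n^2)}$ graphs.

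Second, and more seriously, you flag the $|Z| \le n^{1-\eps}$ bound as ``the main obstacle'' and then offer no mechanism for it. A single application of Szemer\'edi's regularity lemma followed by a per-vertex union bound cannot produce a saving of the form $n^{1-\eps}$: the number of clusters is an enormous tower-type constant, the precision of a regularity partition is a fixed $\eps_{\mathrm{reg}}$, and the resulting error set is inherently of size $\Theta(n)$, not $n^{1-\eps}$. The ABBM argument gets around this by a multi-scale refinement --- roughly, one first gets a rough partition with linear error from regularity and stability, and then iteratively improves it by re-running entropy comparisons on successively smaller vertex subsets, each round shaving a polynomial factor off the error --- and this bootstrapping, not the regularity lemma itself, is where the exponent $1-\eps$ comes from and where the quantitative content of Theorem~\ref{t:Gthin} is actually consumed. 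Your proposal names the destination but omits the engine that drives you there. The definition of $\mc{T}$ via vertex blow-ups and the claim that maximality of $k$ forces it to be thin are plausible and in the spirit of~\cite{ABBM11}, but the final verification that $G[S_i]\in\mc{T}$ also needs a cleaner statement than ``one could modify $G$ within $\cP$ to violate the template,'' since the modification must itself be counted.
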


It is not hard to see that Theorems~\ref{t:Gthin} and ~\ref{t:Gmain} imply that \eqref{e:Gentropy} can be strengthened by replacing the $o(1)$ term by $o(n^{-\eps})$ for some $\eps>0$ depending only on $\mc{P}$.

\subsection*{Our results}

We  translate the concepts appearing in the statements of Theorems~\ref{t:Gthin} and ~\ref{t:Gmain} to our setting as follows. 

We define the \emph{dimension $\dim(V)$} of a binary projective space $V$ to be equal to the dimension of the $\bb{F}_2$-vector space $W$ such that $V=W \setminus  \{0\}$.  We caution the reader that  our definition differs  (by one)   from the standard definition of dimension of projective spaces, but appears to be more natural in the binary setting. The \emph{dimension} $\dim(M)$ of a matroid $M$ is the dimension of $V(M)$. That is if $M:\bb{F}_2^n \setminus\{0\} \to \{0,1\}$, then $\dim(M)=n$. 

We say that a matroid property is \emph{non-trivial} if it contains matroids of arbitrarily high dimension. Let $\mc{O}$ and $\mc{I}$ denote the hereditary properties consisting of all identically zero matroids and of all identically one matroids, respectively. The matroid Ramsey theorem (\cref{t:MRamsey} below) implies that $\mc{O}$ and  $\mc{I}$ are the only minimal non-trivial hereditary matroid properties.

For  an integer $k \geq 0$,
let $\brm{Ext}^k(M)$ denote the set of all extensions of $M$ to  matroids of dimension at most $k+\dim(M)$. In other words, $M' \in  \brm{Ext}^k(M)$ if and only if $M$ is a restriction of $M'$ to a  subspace of  codimension at most $k$ of $V(M')$. For a property $\mc{P}$, let 
$$\brm{Ext}^k(\mc{P})=\bigcup_{ M \in \mc{P}} \brm{Ext}^k (M).$$
 Note that if $\mc{P}$ is hereditary, then so is $\brm{Ext}^k(\mc{P})$.

For a matroid $M$ and a subspace $W$ of $V(M)$ we denote by $M[W]$ the restriction of $M$ to $W$. Let $\mc{M}(k,0) = \brm{Ext}^k(\mc{O})$, and note that  $\mc{M}(k,0)$ is the family of all  matroids $M$ such that $M[W] \equiv 0$ for some subspace $W$ of $V(M)$ of codimension at most $k$.

\begin{definition}[Critical number]
The critical number of a matroid $M$ is the smallest codimension of a linear subspace $W \subseteq V(M)$ such that $M[W] \equiv 0$. 
\end{definition}

Hence $\mc{M}(k,0)$ is the set of all matroids with  critical number at most $k$. 
It has been noted that  families of matroids with critical number $k$ are  in many respects  similar to families of graphs with chromatic number $k+1$.  (See e.g. ~\cite{GeeNel16}.) Symmetrically, let $\mc{M}(k,1) = \brm{Ext}^k(\mc{I})$.


The families $\mc{M}(k,0)$ and $\mc{M}(k,1)$ are the matroid equivalents of graph families $\mc{H}(s,t)$ with $s+t=k+1$.

\begin{definition} 
The \emph{critical  number} $\chi_c(\mc{P})$ of a non-trivial hereditary matroid property $\mc{P}$ is the maximum integer $k \geq 0$ such that $\mc{M}(k,i) \subseteq \mc{P}$ for some $i \in \{0,1\}$.  
\end{definition}

We say that a non-trivial  hereditary  matroid property is \emph{thin} if $\chi_c(\mc{P}) = 0$. 
Note that $\chi_c(\mc{P}) < k$ if there exist  matroids $M_0,M_1 \not \in \mc{P}$    such that  for  $i \in \{0,1\}$   we have  $M_i[W_i] \equiv i$ for some  linear subspace $W_i \subseteq V(M_i)$ of codimension $k$.

Since by the matroid Ramsey theorem  $\mc{O}$ and  $\mc{I}$ are the only minimal non-trivial hereditary matroid properties, $\chi_c(\mc{P})$ can be equivalently defined as the maximum integer $k$ such that $\brm{Ext}^k(\mc{T}) \subseteq \mc{P}$  for some  non-trivial hereditary matroid property $\mc{T}$.

A linear injection $\phi: V(N) \to V(M)$  is \emph{an  $N$-instance  in a matroid $M$} if $N(x)=M(\phi(x))$ for every $x \in V(N)$. Often, we only need  the equality $N(x)=M(\phi(x))$ to hold only for $x$ in some subset of $V(N)$, just as for the bigraph instances we did not insist that the parts of the bipartition are mapped onto independent sets. This motivates the following key definition. A \emph{pattern} is a function $N : V(N) \to \{0,1,\star\}$ where $V(N)$  is a binary projective space. For patterns $N$ and $M$, an {$N$-instance in  $M$} is a  linear injection $\phi: V(N) \to V(M)$ such that $N(x)=M(\phi(x))$ for every $x \in V(N)$ such that $N(x) \in \{0,1\}$.   We say that a matroid $M$ is \emph{$N$-free} for a pattern $N$ if  there are no $N$-instances in $M$.

\begin{definition}
	For an integer $k \ge 1$, a pattern $A$ is said to be \emph{$k$-affine} if  $ A^{-1}(\star)$ is a subspace of codimension $k$ in $V(A)$. 
\end{definition}

A important example of a $k$-affine pattern is the pattern corresponding to {\em Bose-Burton geometry}.

\begin{definition}[Bose-Burton pattern]
	\label{def:BB}
	The {\em Bose-Burton pattern} $B=BB_{k,d}$ is the $k$-affine pattern with $\dim(B)=d$ such that for some subspace $W \subseteq V(B)$ of codimension $k$,  we have  $B\vert_{W} \equiv \star$, and $B\vert_{V \setminus W} \equiv 1$.
\end{definition}

Note that setting the $\star$'s of a $k$-affine pattern to $0$ yields a matroid with critical number at most $k$. In particular, $1$-affine patterns are matroidal analogues of bigraphs. We simply refer to   $1$-affine patterns  as   \emph{affine patterns}.


Let $\mc{P}^n$ denote the set of all matroids $M \in \mc{P}$ with $\dim(M)=n$.  As for graphs, the \emph{entropy} of $\mc{P}$ is defined as  $h(n,\mc{P}) = \log|\mc{P}^n|.$
We are now ready to state our analogues of Theorems~\ref{t:Gthin} and~\ref{t:Gmain}. 

\begin{restatable}{thm}{Thin}\label{t:thin} Let $\mc{T}$ be a non-trivial hereditary property of binary matroids. Then the following are equivalent:  \begin{itemize}
		\item[(a)] $\mc{T}$ is thin (i.e. $\chi_c(\mc{T}) = 0$),   
		\item[(b)] there exists an affine pattern $A$ such that every matroid in $\mc{P}$ is $A$-free,
		\item [(c)] $h(n,\mc{T})=o(2^n)$.
	\end{itemize}
\end{restatable}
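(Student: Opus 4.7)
The plan is to establish the three implications $(c)\Rightarrow(a)\Rightarrow(b)\Rightarrow(c)$ in a cycle. For $(c)\Rightarrow(a)$ I argue by contrapositive: if $\chi_c(\mc{T})\geq 1$ then either $\mc{M}(1,0)$ or $\mc{M}(1,1)$ lies in $\mc{T}$; say $\mc{M}(1,0)\subseteq\mc{T}$. Fixing any hyperplane $W$ of $V:=\bb{F}_2^n\setminus\{0\}$, every matroid equal to $0$ on $W$ and arbitrary on $V\setminus W$ belongs to $\mc{M}(1,0)\subseteq\mc{T}$; varying the $2^{n-1}$ values on $V\setminus W$ produces $2^{2^{n-1}}$ such matroids, whence $h(n,\mc{T})\geq 2^{n-1}$, contradicting $h(n,\mc{T})=o(2^n)$.

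For $(a)\Rightarrow(b)$, thinness produces $M_0,M_1\notin\mc{T}$ together with hyperplanes $W_i\subseteq V(M_i)$ satisfying $M_i|_{W_i}\equiv i$. Extending each $M_i$ (which preserves exclusion from $\mc{T}$ by heredity), I may assume $\dim M_0=\dim M_1=d$. Choose $D$ large enough that every matroid of dimension $D-1$ contains a $(d-1)$-dimensional monochromatic subspace, as granted by the matroid Ramsey theorem (\cref{t:MRamsey}). I would then construct an affine pattern $A$ of dimension $D$ with star-hyperplane $W:=A^{-1}(\star)$ whose values on $V(A)\setminus W$ are chosen so that for every $(d-1)$-dimensional linear subspace $U\subseteq W\cup\{0\}$ and every $i\in\{0,1\}$, some coset $v_i+U\subseteq V(A)\setminus W$ has $A|_{v_i+U}$ equal, up to affine isomorphism, to the effective part $M_i|_{V(M_i)\setminus W_i}$. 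A first-moment argument shows such an $A$ exists once $D$ is taken sufficiently large relative to $d$ and $2^{d-1}$, since each individual matching condition fails with probability at most $\exp(-2^{D-d-2^{d-1}})$ under a uniformly random assignment and there are only $O(2^{(D-1)(d-1)})$ conditions. Given any $A$-instance $\phi$ in a matroid $M$, matroid Ramsey applied to $M|_{\phi(W)}$ furnishes a $(d-1)$-dimensional monochromatic subspace $U^*\subseteq\phi(W)$ with $M|_{U^*}\equiv c$. By construction the pullback $U:=\phi^{-1}(U^*)$ admits a coset $v_c+U$ on which $A$ matches $M_c$'s effective part; applying $\phi$, the $d$-dimensional subspace $\langle\phi(v_c),U^*\rangle\subseteq V(M)$ has a zero/one hyperplane $U^*$ and an affine complement whose $M$-values match $M_c$'s effective part, hence is isomorphic as a matroid to $M_c$. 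Heredity forces $M\notin\mc{T}$, so every matroid in $\mc{T}$ is $A$-free.

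The implication $(b)\Rightarrow(c)$ is the main obstacle, and this is where the arithmetic analogues of Szemer\'edi regularity are deployed. The plan is a supersaturation/removal argument using higher-order Fourier analysis---and since affine patterns have complexity one, only the linear ($U^2$) level is actually needed. A counting lemma at this level shows that any matroid $M$ of dimension $n$ either contains $\Omega(2^{n\dim A})$ many $A$-instances or, outside a set of size $o(2^n)$, agrees with a function of bounded linear Fourier complexity. Combined with a container-type enumeration of such structured templates, this bounds the number of $A$-free matroids of dimension $n$ by $2^{o(2^n)}$, giving $h(n,\mc{T})=o(2^n)$. The delicate step is sharpening the removal/container bound enough to reach a subexponential-in-$2^n$ entropy rather than the weaker $2^{(1-\eps)2^n}$ bound that a density-of-ones estimate (in the spirit of Bose--Burton) would give on its own; this step plays the role of an arithmetic induced-removal lemma for affine configurations in $\bb{F}_2^n$.
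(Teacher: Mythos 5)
Your proofs of $(c)\Rightarrow(a)$ and $(a)\Rightarrow(b)$ are essentially correct and parallel the paper. The first coincides with the paper's counting argument. For $(a)\Rightarrow(b)$, your direct probabilistic construction of the affine pattern $A$, followed by an application of the matroid Ramsey theorem inside the star-subspace of an $A$-instance, is in substance the case $k=1$ of the paper's \cref{l:merge}; the paper factors this through \cref{l:Bpattern} and \cref{c:countExt}, but the first-moment estimate over cosets is the same calculation you sketch.

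The implication $(b)\Rightarrow(c)$ is where the proposal breaks down, and in two ways. First, the assertion that ``affine patterns have complexity one, so only the linear ($U^2$) level is actually needed'' is false. For an affine pattern $A$ of dimension $d$, the set $V(A)\setminus A^{-1}(\star)$ is an affine copy of $\bb{F}_2^{d-1}$, and the density of such configurations is governed by Gowers uniformity of order about $d-1$; already for $d=4$ the monochromatic count is $\e_{x,y_1,y_2,y_3}\prod_{S\subseteq[3]}f\bigl(x+\sum_{i\in S}y_i\bigr)=\|f\|_{U^3}^8$, which is not controlled by $\|f\|_{U^2}$ (quadratic phases are $U^2$-uniform but not $U^3$-uniform). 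This is exactly why the paper invokes the full higher-order decomposition theorem (\cref{t:decomp}) at a degree $d_{\ref{l:ud}}(A)$ depending on $A$, via \cref{l:ud}, rather than a $U^2$ regularity lemma. Second, and more seriously, the crux of $(b)\Rightarrow(c)$ is left as a placeholder: you correctly observe that a density bound alone would give only $h(n,\mc{T})\le(1-\eps)2^n$, and flag ``sharpening the removal/container bound'' as the delicate step, but you offer no mechanism. In the paper this gap is closed by \cref{t:bd} and the $k=0$ case of \cref{l:SubEntropy}: the decomposition theorem produces at most $2^{n^{O(1)}}$ container functions $f$ with $t(A,f)\le\delta$ such that every $A$-free matroid is $f$-structured for one of them, and then the Bose--Burton removal and stability theorems (\cref{t:removal}, \cref{t:stability}) show that small $t(A,f)$ for a $1$-affine $A$ forces $\hat f=\min\{f,1-f\}$ below a fixed threshold outside a set of size $\eps|V|$, so $H(f)\le\eps 2^n$ and $\log|\mc{M}(f)|\le\eps 2^n$ by \cref{l:num}. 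Without a concrete argument of this kind, and with the erroneous $U^2$ reduction in place, the proposed argument does not establish the entropy bound.
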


\begin{restatable}{thm}{Main}\label{t:main}
	Let $\cP$ be a non-trivial hereditary property of binary matroids  with $\chi_c(\cP) = k$. Then there exists a thin hereditary property $\mc{T}$ such that almost every matroid in $\mc{P}$ belongs to $ \brm{Ext}^k(\mc{T})$.	
\end{restatable}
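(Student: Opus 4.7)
The plan is to adapt the strategy of Alon, Balogh, Bollob\'as and Morris, with the higher-order $\bb{F}_2^n$-Fourier decomposition theorems playing the role of Szemer\'edi's regularity lemma and a matroidal container/supersaturation argument replacing its graph counterpart. We may assume $k \geq 1$ (the case $k=0$ is handled by $\mc{T}=\mc{P}$) and, by symmetry, that $\mc{M}(k,0) \subseteq \mc{P}$. Since $\chi_c(\mc{P})=k$, there are witnesses $N_0 \in \mc{M}(k+1,0) \setminus \mc{P}$ and $N_1 \in \mc{M}(k+1,1) \setminus \mc{P}$ that bound $\mc{P}$ from above. The key quantitative input I would establish first is a sharp form of the matroid Alekseev-Bollob\'as-Thomason formula, $h(n,\mc{P}) = (1-2^{-k})2^n + O(2^{(1-\eps)n})$ for some $\eps>0$ depending only on $\mc{P}$. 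The lower bound is an immediate count of $\mc{M}(k,0)^n$ (a matroid in $\mc{M}(k,0)$ is described by a codimension-$k$ subspace together with arbitrary values on its complement); the upper bound follows from applying \cref{t:thin} to $N_0, N_1$ combined with a matroidal container argument.

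Next, I would apply a higher-order decomposition theorem to replace each $M \in \mc{P}^n$ by a bounded-complexity \emph{template} -- a step function constant on cosets of some coarse linear factor -- plus a pseudorandom residual; the number of such templates is sub-exponential in $2^n$. An entropy argument then forces the templates responsible for almost all of $|\mc{P}^n|$ to resemble a Bose-Burton pattern $BB_{k,n}$ for some codimension-$k$ subspace $W$: outside $W$ the template is close to the value~$1$, and on $W$ it is close to the value~$0$, matching the structure of a generic matroid in $\mc{M}(k,0)$. Crucially, the forbidden witnesses $N_0$ and $N_1$ force the typical restriction $M[W]$ to avoid a finite list $\mc{A}$ of affine patterns, which one extracts from $N_0, N_1$ by invoking the $(a)\Leftrightarrow(b)$ direction of \cref{t:thin} applied to the property of matroids $N$ such that plugging $N$ into the Bose-Burton template yields an element of $\mc{P}$.

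Finally, define $\mc{T}$ to be the hereditary property of matroids that are $A$-free for every $A\in \mc{A}$; by \cref{t:thin} it is thin, and the previous step shows almost every $M \in \mc{P}^n$ has a codimension-$k$ restriction in $\mc{T}$, i.e., belongs to $\brm{Ext}^k(\mc{T})$. The main obstacle will be the alignment of templates with a \emph{single} codimension-$k$ subspace: one must rule out the case in which the template only partially matches a Bose-Burton structure along several subspaces simultaneously, and one must obtain a polynomial-in-$n$ gap between the entropies of aligned and misaligned matroids. This is where the higher-order Fourier machinery must carry more weight than in~\cite{ABBM11}, because the arithmetic analogue of the graph ``induced removal'' step at this precision is not off-the-shelf; I expect the bulk of the technical difficulty of the proof to lie in making this alignment quantitative enough to outcompete the $2^{(1-2^{-k})2^n}$ baseline.
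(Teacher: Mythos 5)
Your plan captures the correct high-level architecture — decompose $\mc{P}^n$ with the higher-order Fourier decomposition theorem into a small family of templates, argue by entropy that the dominant templates must have Bose--Burton shape, and define $\mc{T}$ in terms of forbidden structure on the resulting codimension-$k$ subspace. This is exactly the strategy the paper packages into \cref{l:general}. Two points, however, are miscalibrated.

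First, the ``key quantitative input'' you propose to establish up front, $h(n,\mc{P}) = (1-2^{-k})2^n + O(2^{(1-\eps)n})$, is neither needed nor, to the authors' knowledge, provable. What \cref{t:main} actually requires is only the much softer bound $h(n,\mc{P} \setminus \brm{Ext}^k(\mc{T})) \le (1-2^{-k}-\delta)2^n$ for some fixed $\delta>0$; paired with the trivial lower bound $h(n,\mc{P}) \ge (1-2^{-k})2^n$, this already gives an $\Omega(2^n)$ gap, hence a doubly-exponentially small exceptional proportion. Deriving your refined error term would require an exponent saving $h(n,\mc{T}) = O(2^{(1-\eps)n})$ for thin $\mc{T}$, and the paper explicitly notes after \cref{t:main} that this is open: \cref{t:thin} only supplies $h(n,\mc{T}) = o(2^n)$, unlike the power-saving available for graph hereditary properties. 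Since you do not in fact invoke the refined formula later in your plan, this step is a red herring; but if your entropy comparison implicitly relied on it, it would be a genuine gap.

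Second, you flag the alignment step — forcing a single codimension-$k$ subspace and getting the quantitative entropy gap against ``misaligned'' templates — as the main technical novelty, saying the arithmetic analogue of the graph induced removal step ``is not off-the-shelf.'' In fact it is: this is precisely where the paper invokes Luo's arithmetic removal lemma \cref{t:removal} for Bose--Burton geometries together with the stability theorem \cref{t:stability} of Liu--Luo--Nelson--Norin. Concretely, in \cref{l:SubEntropy} a high-entropy template $f$ with small $t(B,f)$, for the $(k+1)$-affine $B$ supplied by \cref{c:kAffine}, has a thresholded support with small Bose--Burton density; removal yields a genuinely $BB_{k+1,d}$-free matroid and stability then pins down the unique codimension-$k$ subspace $W$. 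The complementary count — that if $M[W] \notin \mc{T}$ then $M$ has exponentially fewer admissible extensions — is the elementary packing lemma \cref{c:countExt}. So the bulk of what you anticipate to be new difficulty is handled by existing tools, and the actual work is in wiring them together. Your construction of $\mc{T}$ as a finite list of forbidden affine patterns extracted from $N_0,N_1$ is morally right but more cleanly realized by setting $\mc{P}' = \brm{Forb}(\{N_0,N_1\})$ and $\mc{T} = \brm{Core}^k(\mc{P}')$, which is automatically hereditary, locally characterized, non-trivial, and thin because $\chi_c(\mc{P}')=k$.
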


To parallel  our informal introduction of Theorems~\ref{t:Gthin} and \ref{t:Gmain}, we can informally describe Theorems~\ref{t:thin} and \ref{t:main}  as stating that for every hereditary matroid property $\mc{P}$ there exists a constant $k$ such that almost every matroid in $\mc{P}$  is structured on a codimension $k$ subspace and is essentially arbitrary outside of it.

As $$h(n-k,\mc{T}) +\s{1-\frac{1}{2^{k}}}2^n \leq  h(n,\brm{Ext}^k(\mc{T})) \leq h(n-k,\mc{T})  +\s{1-\frac{1}{2^{k}}}2^n + kn,$$
Theorems~\ref{t:thin} and \ref{t:main} immediately imply an asymptotic formula for the entropy of every hereditary matroid property, which depends only on its critical number.

\begin{cor} Let $\cP$ be a non-trivial  hereditary matroid property then 
 	$$ h(n,\mc{P})= \s{1 - \frac{1}{2^{\chi_c(\cP)}} + o(1)}2^n.$$
\end{cor}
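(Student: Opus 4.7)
The plan is to combine \cref{t:thin} and \cref{t:main} with the displayed two-sided bound on $h(n,\brm{Ext}^k(\mc{T}))$, extracting matching upper and lower estimates. Throughout, set $k=\chi_c(\mc{P})$.

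For the lower bound, I would invoke the definition of the critical number directly: there exists $i \in \{0,1\}$ with $\mc{M}(k,i) \subseteq \mc{P}$. Writing $\mc{T}_0=\mc{O}$ and $\mc{T}_1=\mc{I}$, this reads $\brm{Ext}^k(\mc{T}_i) \subseteq \mc{P}$. Since each of $\mc{O}$ and $\mc{I}$ contains exactly one matroid of each dimension, $h(n-k,\mc{T}_i)=0$, so the left-hand side of the displayed inequality immediately yields
\[
h(n,\mc{P}) \;\geq\; h(n,\brm{Ext}^k(\mc{T}_i)) \;\geq\; \s{1-\frac{1}{2^{k}}}2^n.
\]

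For the upper bound, \cref{t:main} supplies a thin hereditary property $\mc{T}$ for which $|\mc{P}^n \cap \brm{Ext}^k(\mc{T})^n| = (1-o(1))|\mc{P}^n|$. In particular $|\mc{P}^n| \leq 2\,|\brm{Ext}^k(\mc{T})^n|$ for all sufficiently large $n$, so $h(n,\mc{P}) \leq h(n,\brm{Ext}^k(\mc{T})) + 1$. The right-hand side of the displayed inequality then gives $h(n,\brm{Ext}^k(\mc{T})) \leq h(n-k,\mc{T}) + (1-2^{-k})2^n + kn$. Because $\mc{T}$ is thin, \cref{t:thin}(c) yields $h(n-k,\mc{T}) = o(2^{n-k}) = o(2^n)$, and $kn+1=o(2^n)$ is negligible, so
\[
h(n,\mc{P}) \;\leq\; \s{1-\frac{1}{2^{k}}}2^n + o(2^n).
\]

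Matching the two bounds produces the claimed asymptotic $h(n,\mc{P})=\s{1 - 2^{-\chi_c(\mc{P})} + o(1)}2^n$. There is no real obstacle here: the structural work is absorbed entirely by \cref{t:thin,t:main} together with the already-supplied two-sided estimate for $h(n,\brm{Ext}^k(\mc{T}))$, and the corollary reduces to simple bookkeeping once these are in hand.
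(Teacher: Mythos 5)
Your proof is correct and follows exactly the route the paper intends: it states the corollary follows ``immediately'' from Theorems~\ref{t:thin} and~\ref{t:main} together with the displayed two-sided estimate for $h(n,\brm{Ext}^k(\mc{T}))$, and you have simply spelled out the straightforward bookkeeping (lower bound via $\mc{M}(k,i)\subseteq\mc{P}$ with $h(n-k,\mc{O})=h(n-k,\mc{I})=0$; upper bound via $|\mc{P}^n|\le(1+o(1))|\brm{Ext}^k(\mc{T})^n|$ and thinness of $\mc{T}$).
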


Let us point out a couple of places where Theorems~\ref{t:thin}  and \ref{t:main} differ from their graph theoretical analogues. 

\begin{itemize}
	\item The bound on entropy of thin properties is weaker. We do not know if a bound of the form $h(n,\mc{T})=o(2^{(1-\eps)n})$ holds for every thin hereditary property $\mc{T}$.
	\item On the other hand, the structure given by Theorem~\ref{t:main} is cleaner, requiring no small leftover set $Z$.
\end{itemize}

Theorems~\ref{t:Gmain} and~\ref{t:main} only give a ``rough'' structural description for typical elements of hereditary families $\mc{P}$, in a sense that a matroid (or a graph) that matches the structural description does not necessarily belong to $\mc{P}$. \footnote{Although, for matroids, \cref{t:main} guarantees that such a matroid belongs to a fixed hereditary family with the same critical number and thus similar entropy.} Meanwhile, the typical structure of $K_{k+1}$-free and $C_4$-free graphs established described above guarantees the absence of $K_{k+1}$ and $C_4$, respectively. It appears desirable to improve Theorem~\ref{t:main} in the same way, i.e. to show that for a given hereditary matroid property $\mc{P}$ there exists a hereditary family $\mc{P}_* \subseteq \mc{P}$ with a structural description as in \cref{t:main}  such that almost every matroid in $\mc{P}$ lies in $\mc{P}_*$.  

We are able to do so for a natural class of locally characterized hereditary matroid properties, defined as follows. For a set $\mc{N}$ of matroids, let $\brm{Forb}(\mc{N})$ denote the set of matroids $M$ such that $M$ is $N$-free for every $N \in \mc{N}$. Then $\brm{Forb}(\mc{N})$ is a hereditary property for every $\mc{N}$. Conversely, for every hereditary matroid property $\mc{P}$ we have $\mc{P}=\brm{Forb}(\mc{N})$ for some $\mc{N}$. A hereditary matroid property $\mc{P}$ is \emph{locally characterized} if there exists a  finite  $\mc{N}$ such that $\mc{P}=\brm{Forb}(\mc{N})$. 

For an integer $k \geq 0$ and a matroid property $\mc{P}$  we define 
 $$\brm{Core}^k(\mc{P}) = \{M \: | \: \brm{Ext}^k(M) \subseteq \mc{P}\}.$$
Thus
  $$\brm{Ext}^k(\brm{Core}^k(\mc{P})) \subseteq \mc{P}.$$  
Note that if $\mc{P}$ is a non-trivial hereditary matroid property, then $\chi_c(\mc{P}) \ge k$ if and only if  $\brm{Core}^{k}(\mc{P})$ is non-trivial, and $\chi_c(\mc{P}) \le k$ if and only if $\brm{Core}^{k}(\mc{P})$ is thin. Hence  $\brm{Core}^{k}(\mc{P})$ is non-trivial and thin if and only if $k=\chi_c(\mc{P})$.
 The following is our last main result.

\begin{restatable}{thm}{Local}\label{t:local}
	Let $\cP$ be a locally characterized  hereditary matroid property with $\chi_c(\cP) = k$, and let $\mc{T} = \brm{Core}^k(\mc{P})$. Then almost every matroid in $\mc{P}$ belongs to $\brm{Ext}^k(\mc{T})$.
\end{restatable}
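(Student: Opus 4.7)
The plan is to derive \cref{t:local} from \cref{t:main} via a counting argument that exploits the local finiteness of $\mc{P}$. Write $\mc{P} = \brm{Forb}(\mc{N})$ for a finite set of patterns $\mc{N}$. My first step is to give an explicit forbidden-pattern description of $\mc{T} = \brm{Core}^k(\mc{P})$: letting
\[
\mc{N}^* \;=\; \{\, N|_U : N \in \mc{N},\; U \subseteq V(N) \text{ of codimension at most } k \,\},
\]
I claim $\mc{T} = \brm{Forb}(\mc{N}^*)$. Given an $N|_U$-instance $\phi_0$ in $M_0$, one freely completes it to an $N$-instance in a $k$-dimensional extension $M^*$ of $M_0$ by lifting a complement of $U$ into the extension part and specifying $M^*$ there to match $N$, witnessing $M_0 \notin \brm{Core}^k(\mc{P})$; conversely, any $N$-instance $\phi$ in an extension $M^*$ of $M_0$ restricts to an $N|_U$-instance in $M_0$ with $U := \phi^{-1}(V(M_0))$ of codimension at most $k$ in $V(N)$. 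In particular $\mc{T}$ is itself locally characterised by the finite family $\mc{N}^*$.

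Now \cref{t:main} supplies a thin hereditary property $\mc{T}'$ with $|\mc{P}^n \setminus \brm{Ext}^k(\mc{T}')| = o(|\mc{P}^n|)$, so it suffices to bound the number of defect pairs $(M, W)$ where $M \in \mc{P}^n$, $W \subseteq V(M)$ has codimension at most $k$, and $M[W] \in \mc{T}' \setminus \mc{T}$. For such a pair, $M_0 := M[W]$ contains some $N|_U$-instance $\phi_0 : U \to V(M_0)$ with $N \in \mc{N}$ and $c := \operatorname{codim}(U) \in \{1, \dots, k\}$. Since $M \in \mc{P}$, every linear lifting $\phi : V(N) \to V(M)$ of $\phi_0$ whose image of $V(N) \setminus U$ lies entirely in the extension part $V(M) \setminus V(M_0)$ must fail to be an $N$-instance in $M$, forcing $M$ to disagree with $N$ on some coordinate of $\phi(V(N) \setminus U)$.

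The key quantitative step is selecting many pairwise-disjoint such liftings of $\phi_0$. Parameterising linear extensions $\phi$ by the image of a chosen complement of $U$ in $V(N)$, and noting that $\phi(V(N) \setminus U)$ decomposes into $2^c - 1$ distinct cosets of $\phi_0(U)$ inside $V(M) \setminus V(M_0)$, one selects $m = \Theta(2^{n - \dim N})$ liftings $\phi_1, \dots, \phi_m$ with pairwise-disjoint images on $V(N) \setminus U$; this uses that $n \gg \dim N$. Over uniformly random extensions $M$ of $M_0$, the corresponding failure events are then mutually independent, each of probability $p = 2^{-(2^{\dim N} - 2^{\dim U})}$, so the number of extensions of $M_0$ lying in $\mc{P}$ is at most
\[
(1-p)^m \cdot 2^{2^n - 2^{n-k}} \;\leq\; \exp\!\bigl(-\Theta(2^{n-\alpha})\bigr) \cdot 2^{2^n - 2^{n-k}},
\]
where $\alpha = \alpha(\mc{N}, k)$ is a finite constant. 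Summing over defects using $|{\mc{T}'}^{n-k}| = 2^{o(2^{n-k})}$ (thinness of $\mc{T}'$) and $\binom{n}{n-k}_2 = 2^{O(n)}$, and comparing against the lower bound $|\mc{P}^n| \geq 2^{2^n - 2^{n-k}}$ (which holds since $\brm{Ext}^k(\mc{T}) \subseteq \mc{P}$ and $\mc{T}$ is non-trivial), gives a ratio with log base $2$ equal to $O(n) + o(2^{n-k}) - \Theta(2^{n-\alpha})$. Since $\alpha$ is a constant, $2^{n-\alpha}$ is at least a fixed positive multiple of $2^{n-k}$, so the exponential term dominates $O(n) + o(2^{n-k})$ and the ratio tends to $0$, as required.

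The main technical point requiring care is the selection of the pairwise-disjoint liftings $\phi_i$ inside the cosets of $\phi_0(U)$ contained in $V(M) \setminus V(M_0)$: one must verify that after excluding cosets meeting $V(M_0)$, enough remain to accommodate $\Theta(2^{n - \dim N})$ disjoint lifts. This is a straightforward counting in the projective geometry of $V(M)$, valid once $n$ exceeds a constant depending only on $\mc{N}$ and $k$, and is where the argument genuinely uses that $\mc{N}$ is finite rather than merely countable.
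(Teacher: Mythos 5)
Your proof is essentially correct, and it takes a genuinely different route from the paper's. The paper derives \cref{t:local} in one line from \cref{l:general}, which is a self-contained entropy bound obtained by running the decomposition theorem (\cref{t:bd}), the stability lemma (\cref{l:SubEntropy}), and the packing/counting lemma (\cref{c:countExt}) in a single pass; that lemma is applied with $\mc{T}=\brm{Core}^k(\mc{P})$, which is locally characterized because $\mc{P}$ is. You instead treat \cref{t:main} as a black box to get a thin $\mc{T}'$ covering almost all of $\mc{P}$, pass to a codimension-$k$ subspace $W$ with $M[W]\in\mc{T}'\setminus\mc{T}$, use $h(n-k,\mc{T}')=o(2^{n-k})$ from \cref{t:thin} in place of \cref{l:SubEntropy} to control the number of possible restrictions $M[W]$, and then re-derive the extension-counting step from scratch. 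That counting step is essentially the content of the paper's \cref{l:extensions} and \cref{c:countExt}; your identification $\mc{T}=\brm{Forb}(\mc{N}^*)$ is correct and plays the same role as the paper's observation, inside \cref{l:general}, that $M'\notin\mc{T}$ yields an $N'$-instance with $N'\notin\brm{Core}^k(\mc{P})$. What your route buys is modularity and a clean separation of concerns (structure from \cref{t:main}, then a local repair); what the paper's route buys is economy, since \cref{l:general} proves \cref{t:main} and \cref{t:local} simultaneously.

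Two small remarks. First, the claim $m=\Theta(2^{n-\dim N})$ for the number of pairwise ``almost disjoint'' lifts is optimistic: the paper's \cref{l:extensions} only guarantees $2^{n-2\dim N}$, and I do not see how to push this to $2^{n-\dim N}$ in general. This does not matter, since any $m=2^{\Omega(n)}$ with $m\to\infty$ exponentially gives $(1-p)^m = 2^{-\Omega(2^n)}$, which is all you use. Second, when reducing to codimension exactly $k$ you should note that this uses that $\mc{T}'$ is hereditary (shrink $W$ to codimension $k$) together with $M\notin\brm{Ext}^k(\mc{T})$ (so the shrunk restriction is still outside $\mc{T}$); and the lower bound $|\mc{P}^n|\geq 2^{2^n-2^{n-k}}$ uses that $\mc{T}=\brm{Core}^k(\mc{P})$ is non-trivial, which holds because $\chi_c(\mc{P})=k$. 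Both points are implicit in your write-up but worth stating.
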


Once again let us informally rephrase \cref{t:local}. It states that for almost every $M \in \mc{P}$, where $\mc{P}$   is as in \cref{t:local}, there exists a codimension $k$ subspace $W$ of $V(M)$ such that
$M[W]$ is structured, while the structure of $M$ outside of $W$ does not matter at all, i.e.  every matroid obtained  by changing values of $M$ on $V(M) \setminus  W$ still lies in $\mc{P}$.

The natural analogue of \cref{t:local} is known not to hold for locally characterized graph hereditary properties, see e.g.~\cite[Theorem 3]{BBS11}  for an example when  a bounded number of exceptional vertices needs to be introduced in the description. The analogue for principal graph hereditary properties was conjectured by Reed and Scott, but was disproved in~\cite{NorYud192}.

\vskip 5pt
In many cases \cref{t:local} can be straightforwardly applied to explicitly describe typical structure of given hereditary families. Let us sketch the argument for one fairly broad class of examples. 

Let $O_2$ be the matroid such that $\dim(O_2)=2$ and $O_2 \equiv 0$. Then $O_2$-free matroids can be considered as analogues of  graphs with independence number at most two. 
As an immediate consequence of \cref{t:local} we obtain a concrete typical structure of $M$-free matroids for any $O_2$-free matroid $M$.   

\begin{cor}\label{c:o2}
	Let $M:\bb{F}_2^k \setminus \{0\} \to \{0,1\}$ be an $O_2$-free matroid with $k \geq 3$, and let $c=|M^{-1}(0)|$.   
	\begin{itemize} \item if $c=0$ then almost every $M$-free matroid belongs to $\mc{M}(k- 1,0)$, 
		\item if $c \geq 2$ then almost every $M$-free matroid belongs to $\mc{M}(k- 2,0)$, and
		\item if $c=1$  then almost  every $M$-free matroid can be obtained from a matroid in $\mc{M}(k- 2,0)$ by changing at most one value. 
		\end{itemize}
\end{cor}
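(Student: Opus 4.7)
The plan is to apply \cref{t:local} to $\mc{P} := \brm{Forb}(\{M\})$, which is locally characterized by the single pattern $M$. The work is to compute $\chi_c(\mc{P})$ and identify $\brm{Core}^{\chi_c(\mc{P})}(\mc{P})$ in each case, and then to translate the conclusion of \cref{t:local} into the stated form. For $c=0$, so $M\equiv 1$: any codimension $\leq k-1$ all-zero subspace meets any $k$-dimensional subspace in dimension at least $1$, so no $k$-dimensional restriction can equal $M$, giving $\mc{M}(k-1,0)\subseteq\mc{P}$; and one directly constructs $M'\in\mc{M}(k,0)$ containing $M$, so $\chi_c(\mc{P})=k-1$. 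For $c\geq 1$: any codimension $\leq k-2$ all-zero subspace meets any $k$-dimensional $U$ in dimension at least $2$, producing an $O_2$-instance in $M'|_U$ that $M$ does not possess, giving $\mc{M}(k-2,0)\subseteq\mc{P}$; using one zero of $M$ one constructs $M'\in\mc{M}(k-1,0)$ containing $M$, so $\chi_c(\mc{P})=k-2$.

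Next I describe the core. For $c=0$, given $M'$ with $M'(v_0)=1$, extend $M'$ by $k-1$ fresh dimensions and set the extension to $1$ on the $k$-dimensional subspace spanned by $v_0$ and the new directions to realise $M$ in the extension, so $M'\notin\brm{Core}^{k-1}(\mc{P})$, and hence $\brm{Core}^{k-1}(\mc{P})=\mc{O}$. For $c\geq 2$, since $M$ is $O_2$-free with $c\geq 2$ zeros, it admits 2-dimensional restrictions realising each of the zero-counts $0,1,2$. For any $M'\notin\mc{O}$, picking a 2-dimensional $T$ containing a $1$-point of $M'$ gives $M'|_T$ of one of these types, matching some 2-dimensional restriction of $M$; I then extend $M'$ by $k-2$ fresh dimensions and linearly embed $V(M)$ into the extension fixing this partial isomorphism, choosing the free values to match $M$, which produces an $M$-instance. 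Hence $\brm{Core}^{k-2}(\mc{P})=\mc{O}$. For $c=1$, the 2-dimensional restrictions of $M$ have $0$ or $1$ zeros, so if $M'$ has two $1$-points $u_1,u_2$ then $M'|_{\mathrm{span}(u_1,u_2)}$ has at most $1$ zero and we obtain an $M$-containing extension as before; conversely, matroids with at most one $1$-point have every 2-dimensional restriction of zero-count $\geq 2$, and a routine check at higher dimensions confirms they lie in the core.

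Translating through $\brm{Ext}^{\chi_c}$: for $c=0$, $\brm{Ext}^{k-1}(\mc{O})=\mc{M}(k-1,0)$; for $c\geq 2$, $\brm{Ext}^{k-2}(\mc{O})=\mc{M}(k-2,0)$; for $c=1$, codimension $\leq k-2$ extensions of matroids with at most one $1$-point are precisely matroids obtained from $\mc{M}(k-2,0)$ by toggling at most one value. The main technical point is the embedding lemma used for the core: given a 2-dimensional subspace $T\subseteq V(M')$ with $M'|_T$ isomorphic to a 2-dimensional restriction of $M$, one extends the partial isomorphism to a linear injection $V(M)\to V(N)$ by using $k-2$ fresh dimensions and choosing free values, which is a short linear-algebra argument once the combinatorial fact about the 2-dimensional restrictions of $M$ (which depends only on $O_2$-freeness and $c$) has been verified.
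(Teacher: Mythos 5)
Your proposal takes essentially the same route as the paper's own proof sketch: identify $\chi_c(\brm{Forb}(M))$, compute $\brm{Core}^{\chi_c}(\brm{Forb}(M))$, and invoke \cref{t:local}. The computation of $\chi_c$ is correct throughout, and the $c=0$ and $c=1$ cases are fine. But in the $c\geq 2$ case your assertion that ``$M$ admits $2$-dimensional restrictions realising each of the zero-counts $0,1,2$'' is not justified and is in fact false in general; this is the same unjustified step hiding in the paper's sketch (where it underlies the claim that every matroid in $\mc{T}=\brm{Core}^{k-2}(\mc{P})$ of dimension $\geq 2$ is identically zero).

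The failure is already visible at $k=3$. Let $M^{-1}(1)=\{e_1,e_2,e_1+e_2\}$, a $2$-dimensional subspace; then $M$ is $O_2$-free with $c=4\ge 2$, but since $M^{-1}(1)\cup\{0\}$ is closed under addition, no line in $V(M)$ contains exactly one zero-point -- the $2$-dimensional restrictions of $M$ realise only zero-counts $0$ and $2$. Now let $N$ be any matroid with $N^{-1}(0)$ equal to a hyperplane of $V(N)$: its lines realise only zero-counts $1$ and $3$, so $N$ is $M[W]$-free for every $2$-dimensional $W\subseteq V(M)$, and a short case analysis on $\dim(U\cap V(N))$ for $3$-dimensional $U$ shows that no codimension-$\le 1$ extension of $N$ contains an $M$-instance. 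Hence $N\in\brm{Core}^1(\brm{Forb}(M))=\mc{T}$ although $N\not\equiv 0$, so $\mc{T}$ is strictly larger than $\mc{O}$. Worse, the corollary's conclusion itself fails for this $M$: the $M$-free matroids of dimension $n$ that vanish on a codimension-$2$ subspace $H$, are identically $1$ on $K\setminus H$ for some hyperplane $K\supset H$, and are arbitrary off $K$ number about $2^{2n-1+2^{n-1}}$, almost none of which lie in $\mc{M}(1,0)$, whereas $|\mc{M}(1,0)^n|\approx 2^{n+2^{n-1}}$; so the fraction of $M$-free matroids lying in $\mc{M}(k-2,0)$ tends to $0$, not $1$. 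To rescue the $c\ge 2$ clause one must exclude the case that $M^{-1}(1)\cup\{0\}$ is a proper subspace (equivalently, that $M^{-1}(0)$ is an affine hyperplane); your writeup, like the paper's sketch, does not do this.
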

\begin{proof}[Proof sketch.]
	Let $\mc{P}=\brm{Forb}(M)$. Let $d=1$ if $c=0$, and $d=2$, otherwise. Note that $M[W] \not \equiv 0$ for every subspace $W$ of $V(M)$ with $\dim(W)=d$. 
	
	Let $\mc{T}=\brm{Core}^{k-d}(\mc{P})$. Then every matroid in $\mc{T}$ is $M[W]$-free for every subspace $W$ of $V(M)$ with $\dim(W)=d$. In the case, $c \neq 1$, this implies that every matroid in $\mc{T}$ of dimension at least two is identically zero, and in the case $c=1$ a single non-zero value is allowed. The corollary thus follows from \cref{t:local}.   
\end{proof}

The rest of the paper is devoted to the proofs of Theorems~\ref{t:thin},~\ref{t:main} and~\ref{t:local}.  In Section \ref{sec:tools}, we introduce the necessary tools from higher order Fourier analysis and extremal theory of binary matroids. In Section \ref{sec:kaffine}, we establish equivalence of conditions (a) and (b) in \cref{t:thin} and give a similar characterization of  properties with higher critical number. The proofs are finished in \cref{sec:entropy}.

\section{Tools}\label{sec:tools}

The proof of \cref{t:Gmain} in \cite{ABBM11} uses the classical toolkit of extremal graph theory: the Erd\H{o}s-Stone theorem and its stability version, Ramsey theorem and Szemeredi's regularity lemma.  Analogues of all of these ingredients exist for matroids and form the foundation of our proof. We introduce them in this section.

We start with the  arithmetic Ramsey theorem, which immediately follows from Graham-Rothschild theorem~\cite{GraRot69}, as observed, in particular, by Green and Sanders~\cite{GreSan15}. See also~\cite[Theorem 19]{BGS15} for a self-contained proof.

\begin{thm}\label{t:MRamsey}
For every $d \geq 1$, there exists $n=n_{\ref{t:MRamsey}}(d)$ such that for every matroid $M$ with $\dim(M) \geq n$, there exists a restriction $N$ of $M$ such that $\dim(N)=d$ and either $N \equiv 0$ or $N \equiv 1$.	
\end{thm}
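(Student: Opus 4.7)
The plan is to derive \cref{t:MRamsey} from the Graham-Leeb-Rothschild Ramsey theorem for linear subspaces of $\bb{F}_q^n$, specialized to $q=2$ and two colors, or equivalently from the Graham-Rothschild parameter-word theorem applied to $\{0,1\}^n$. The Graham-Leeb-Rothschild theorem asserts that for all $d, e, r, q$, there exists $n$ such that every $r$-coloring of the $d$-dimensional subspaces of $\bb{F}_q^n$ contains an $e$-dimensional subspace all of whose $d$-subspaces are monochromatic. In our setting $q=2$, so $1$-dimensional subspaces are precisely the pairs $\{0,x\}$ with $x \neq 0$, and a $2$-coloring $M : \bb{F}_2^n \setminus \{0\} \to \{0,1\}$ is the same data as a $2$-coloring of the $1$-subspaces. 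Applying the theorem with $d=1$, $e=d$, $r=2$ produces a $d$-dimensional linear subspace $W$ of $\bb{F}_2^n$ all of whose nonzero elements share the same $M$-value; the restriction $N = M|_{W \setminus \{0\}}$ is then the desired matroid.

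For a self-contained argument (as in \cite[Theorem 19]{BGS15}), I would instead proceed by induction on $d$. The base case $d=1$ is trivial: any nonzero $x \in V(M)$ gives a one-dimensional restriction of constant value $M(x)$. For the inductive step I would apply the Graham-Rothschild parameter-word theorem to $\{0,1\}^n$ to obtain a monochromatic $d$-parameter combinatorial subspace, which as a set is a $d$-dimensional \emph{affine} subspace of $\bb{F}_2^n$, and then post-process it into a linear one by inflating the parameter count and arranging the constant coordinates so as to force $0$ into the resulting subspace.

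The main obstacle is precisely this last step: Graham-Rothschild directly supplies only monochromatic affine subspaces, while \cref{t:MRamsey} demands a monochromatic linear subspace. This can be handled either by the above bookkeeping (applying Graham-Rothschild with a suitably inflated number of parameters and then extracting a linear sub-structure from the resulting affine one) or, more cleanly, by appealing to Graham-Leeb-Rothschild, which is tailored to linear subspaces of $\bb{F}_q^n$ from the outset and therefore yields the conclusion without any further manipulation, at the cost of a tower-type bound on $n_{\ref{t:MRamsey}}(d)$.
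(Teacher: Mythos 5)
Your proposal is correct and follows essentially the same approach the paper takes: the paper states the result without proof, citing Graham--Rothschild~\cite{GraRot69} (as observed by Green and Sanders) and pointing to~\cite[Theorem 19]{BGS15} for a self-contained argument, which is exactly the route you outline. Your observation that the parameter-word form of Graham--Rothschild yields only a monochromatic \emph{affine} subspace, so one must either post-process it into a linear one or invoke Graham--Leeb--Rothschild directly, is accurate and is precisely the technical point that the paper's citation elides.
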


Our most sophisticated tool, the decomposition theorem (\cref{t:decomp} below), provides for every matroid $M$, a bounded size approximation of $M$. If $M$ is $N$-free for a pattern $N$ then the resulting approximation is only approximately $N$-free, that is, it has small $N$-density defined as follows.

Let $N$ be a pattern and let $M$ be a matroid. The \emph{$N$-density in $M$}, denoted by $t(N,M)$ is the probability that a uniformly random linear injection $\phi: V(N) \to V(M)$ is an $N$-instance. Additionally, the approximations mentioned above are not matroids, but more general functions $f:V \to [0,1]$. For a pattern $N$ with $V(N)=U$ we define the \emph{$N$-density}  in  such a function $f$ as
	$$t(N, f) = \mathlarger{\mathop{\e}}_{\phi} \left[\prod_{x \in N^{-1}(1)} f(\phi(v)) \prod_{x \in N^{-1}(0)} (1-f(\phi(v))) \right],$$
where the expectation is with respect to uniform distribution on linear injections  $\phi : U \to V$.

 Recall from \cref{def:BB} that $BB_{k,d}$ denotes the $k$-affine $d$-dimensional Bose-Burton pattern, which is the $d$-dimensional pattern that is $\star$ inside a subspace of codimension $k$, and $1$ outside.    Note that any   matroid $M$ that is zero on a subspace $W$ of codimension $k$ is a  $BB_{k+1,d}$-free matroid with $|M^{-1}(1)|= (1-2^{-k})|V(M)|$.   The following result of Liu et al.~\cite{LLNN20} is a matroidal analogue of the stability version of the Erd\H{o}s-Stone theorem. 

\begin{thm}[{\cite[Theorem 1.3]{LLNN20}}] \label{t:stability}
	For all $d \in \bb{N}$ and $\eps > 0$  there exist  $\delta=\delta_{\ref{t:stability}}(d,\eps), D=D_{\ref{t:stability}}(d,\eps) > 0$ satisfying the following. Let $k \in \bb{Z}_{+}$, $k< d$, and let $M$ be a $BB_{k+1,d}$-free matroid with $\dim(M) \geq D$.  If $|M^{-1}(1)| \geq (1 - 2^{-k} -\delta)|V(M)|$ then there exists a  subspace  $W  \subset V(M)$ of codimension $k$ such that $|M^{-1}(1) \cap W| \leq \eps|V(M)|$.
\end{thm}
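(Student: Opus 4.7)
The plan is to parallel Simonovits's stability proof of the Erd\H{o}s--Stone theorem, using the arithmetic regularity lemma (higher-order Fourier analysis over $\mathbb{F}_2$) in place of Szemer\'edi's lemma and the Bose--Burton theorem in place of Tur\'an's. The three ingredients are a decomposition theorem for bounded $\mathbb{F}_2^n$-valued functions, a counting lemma for the $BB_{k+1,d}$-density, and a finite-dimensional stability lemma for Bose--Burton.

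First, I would apply a decomposition theorem to write $M = f + g$, where $f : V(M) \to [0,1]$ is measurable with respect to a bounded-complexity polynomial factor whose atoms are cosets of a subspace $U \subseteq V(M)$ of codimension $r = r(\delta, d)$, and $g$ is quasi-random in a Gowers norm strong enough that the counting lemma yields $|t(BB_{k+1,d}, M) - t(BB_{k+1,d}, f)| \le \eta(\delta)$. Since $M$ is $BB_{k+1,d}$-free this gives $t(BB_{k+1,d}, f) \le \eta(\delta)$, while the density hypothesis gives $\mathbb{E}[f] \ge 1 - 2^{-k} - \delta$.

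Second, $f$ descends to a $[0,1]$-valued function $\bar f$ on the bounded projective space $V(M)/U \simeq PG(r-1, 2)$, and both conditions descend with it. I would then prove a finite-dimensional stability lemma: any $\bar f$ on $PG(r-1,2)$ with mean at least $1 - 2^{-k} - \delta$ and $BB_{k+1,d}$-density at most $\eta(\delta)$ must be $\ell^1$-close to the indicator of the complement of some codimension-$k$ subspace $\overline W \subseteq V(M)/U$. This is a compactness argument: the density functional is continuous on $[0,1]^{PG(r-1,2)}$, and the extremizers of the Bose--Burton problem under the mean constraint are precisely these indicators, so any near-extremizer is $\ell^1$-close to one of them. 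Lifting $\overline W$ to $W \subseteq V(M)$ of codimension $k$ and absorbing the $L^1$-error from $g$ gives $|M^{-1}(1) \cap W| \le \epsilon |V(M)|$.

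The main obstacle is the counting lemma step: the decomposition theorem produces quasi-randomness only in the appropriate Gowers $U^s$-norm, where $s$ depends on the true complexity of the pattern $BB_{k+1,d}$, so the counting lemma must invoke the corresponding $U^s$-inverse theorem. Tracking the dependence of $\eta(\delta)$ through these layers and choosing the decomposition parameters consistently with the finite-dimensional stability tolerance is where the real work lies; the finite-dimensional stability itself is a qualitative compactness argument that produces no explicit bounds on $\delta$ and $D$ but suffices for the form of the theorem claimed.
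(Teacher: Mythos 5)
The paper does not prove this statement: \cref{t:stability} is cited verbatim from~\cite{LLNN20} as an external black-box ingredient, so there is no proof in this paper to compare yours against. That said, your sketch contains an internal inconsistency worth flagging. You posit a decomposition $M = f + g$ where $f$ is measurable with respect to ``a bounded-complexity polynomial factor whose atoms are cosets of a subspace $U$,'' and then descend to the quotient projective space $V(M)/U$. But you also, correctly, observe at the end that controlling $t(BB_{k+1,d},\cdot)$ may require a Gowers $U^s$-norm with $s$ depending on the true complexity of the pattern. These two premises are in tension: obtaining $U^s$-uniformity for $s>2$ via the decomposition theorem (as in \cref{t:decomp}) produces a \emph{polynomial} factor of degree $s-1$, whose atoms are common level sets of nonclassical polynomials and are in general \emph{not} cosets of a subspace. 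Once the atoms fail to be cosets, the quotient $V(M)/U$ is not a projective space, your descent to $PG(r-1,2)$ is unavailable, and the lifting of $\overline W$ back to a codimension-$k$ subspace $W\subseteq V(M)$ has no meaning. So the compactness/finite-dimensional-stability step, which is the heart of your plan, only makes sense in the complexity-one regime where a linear (Green-type) regularity lemma applies; for general $k,d$ you would need an argument that works atom-by-atom inside a genuine polynomial factor, or a different route to extract the linear subspace $W$ directly. The paper itself sidesteps all of this by taking \cref{t:stability} as given and pairing it with the removal lemma \cref{t:removal}, both imported from the extremal-matroid literature.
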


We also need a removal lemma due to Luo~\cite{Luo19} which we only apply and state for the special case of Bose-Burton geometries.
 
\begin{thm}[{\cite[Theorem 4.1]{Luo19}}] \label{t:removal}
For all $d \in \bb{N}$ and $\eps > 0$   there exist  $\delta=\delta_{\ref{t:removal}}(d,\eps) > 0$ satisfying the following. Let $M$ be a matroid such that $t(BB_{k+1,d},M) \leq \delta$ for some integer $0 \leq k \leq d-1$. Then there exists a  $BB_{k+1,d}$-free matroid $M_0$ with $V(M_0)=V(M)$ and $M^{-1}_0(1) \subseteq M^{-1}(1) $ such that $|M^{-1}(1) - M^{-1}_0(1)| \leq \eps |V(M)|$.
\end{thm}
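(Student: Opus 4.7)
The strategy mirrors the standard proof of the graph removal lemma via regularity, with the higher-order Fourier decomposition theorem (introduced later in the paper) playing the role of Szemer\'edi's regularity lemma. I would begin by applying this decomposition to $M : V(M) \to \{0,1\}$, obtaining a splitting
$$M = M_{\mathrm{str}} + M_{\mathrm{sml}} + M_{\mathrm{unf}},$$
where $M_{\mathrm{str}} : V(M) \to [0,1]$ has bounded complexity (it is measurable with respect to the $\sigma$-algebra generated by a bounded number of non-classical polynomial phases of degree $<d$), $M_{\mathrm{sml}}$ has small $L^2$-norm, and $M_{\mathrm{unf}}$ has negligible Gowers $U^d$-norm, each controlled by parameters that we are free to tune in terms of $\eps$ and $d$.

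The second step is a counting lemma establishing
$$|t(BB_{k+1,d}, M) - t(BB_{k+1,d}, M_{\mathrm{str}})| \leq \eps/8.$$
Writing $t(BB_{k+1,d}, \cdot)$ as a multilinear average over the $1$-entries of $BB_{k+1,d}$, one telescopes along the decomposition and bounds each cross-term by a generalized von Neumann / Cauchy--Schwarz inequality. The support $(BB_{k+1,d})^{-1}(1)$ is the complement of a codimension-$(k+1)$ affine subspace of $\bb{F}_2^d$, a configuration whose \emph{true complexity} is at most $d-1$, so control by $U^d$ suffices.

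The third step exploits the structured approximation. Since $M_{\mathrm{str}}$ is essentially constant on the atoms of a bounded-complexity factor, and $t(BB_{k+1,d}, M_{\mathrm{str}}) \leq 2\delta$, any $BB_{k+1,d}$-instance in $M_{\mathrm{str}}$ forces contributions of order $\Omega(1)$ from each participating atom-tuple. A pigeonhole over the bounded collection of atom-tuples then yields a union of atoms $Z \subseteq V(M)$ of total density at most $\eps/4$ whose removal eliminates every $BB_{k+1,d}$-instance in $M_{\mathrm{str}}$. I then define $M_0$ by setting $M_0 \equiv 0$ on $Z$ together with the cleanup set $\{v : |M(v) - M_{\mathrm{str}}(v)| > 1/3\}$ (whose density is $O(\|M_{\mathrm{sml}}\|_1)$), and $M_0 = M$ elsewhere. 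A final application of the counting lemma to $M_0$ forces $t(BB_{k+1,d}, M_0) = 0$, i.e.\ $M_0$ is $BB_{k+1,d}$-free, while the total deletion is at most $\eps |V(M)|$.

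The principal obstacle is the counting-lemma step: establishing that the $BB_{k+1,d}$-pattern density is controlled by a Gowers norm of bounded degree. For a general affine pattern this is the delicate ``true complexity'' problem, but the very regular affine support of the Bose--Burton pattern should permit a direct Cauchy--Schwarz induction that expands the product over cosets of the $\star$-subspace one at a time, sidestepping the full complexity machinery. A secondary nuisance is ensuring that the ``cleanup'' in the third step does not itself create new instances, but this is handled by taking the uniformity parameter for $M_{\mathrm{unf}}$ small enough relative to the density of $M_{\mathrm{str}}$ on atoms, as in analogous arithmetic removal arguments.
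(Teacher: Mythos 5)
Note first that Theorem~\ref{t:removal} is not proved in this paper: it is imported verbatim from Luo~\cite{Luo19}, and the paper uses it as a black box. So there is no ``paper's own proof'' to compare against; what follows is an assessment of your sketch on its merits.

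Your outline follows the standard template for arithmetic removal lemmas (decomposition into structured, small, and uniform parts; a counting lemma controlling pattern densities by a Gowers norm; a cleanup step), and this is indeed the right circle of ideas. The infrastructure you invoke is essentially what the present paper itself records in Lemma~\ref{l:ud} and Theorem~\ref{t:decomp}, so the first two steps of your plan are sound in spirit, even if the precise degree of the controlling Gowers norm needs justification. You do not need to resolve the ``true complexity'' of the Bose--Burton pattern exactly: the Gowers--Wolf type result cited in Lemma~\ref{l:ud} already yields that \emph{some} $U^{d+1}$ with $d$ depending only on the pattern controls its density, and that is all the removal argument requires. So the worry you flag about whether $U^d$ suffices is not a real obstruction.

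The genuine gap is in your final step. You write that ``a final application of the counting lemma to $M_0$ forces $t(BB_{k+1,d}, M_0)=0$.'' A counting lemma only ever gives an \emph{approximate} equality between $t(\cdot,M_0)$ and $t(\cdot,M_{0,\mathrm{str}})$; it cannot by itself force an exact zero, and without exactness you have not shown $M_0$ is $BB_{k+1,d}$-free, which is what the theorem demands. The correct logic is a supersaturation argument running in the opposite direction: after deleting the low-density atoms and the cleanup set, suppose for contradiction that $M_0$ still contains a $BB_{k+1,d}$-instance. That instance lives entirely inside atoms on which the structured part has density bounded below by some $\eta=\eta(\eps,d)>0$; a counting-lemma lower bound then shows $t(BB_{k+1,d},M)\geq c(\eta,d)>0$, contradicting the hypothesis $t(BB_{k+1,d},M)\leq\delta$ once $\delta$ is chosen smaller than $c(\eta,d)$. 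As written, your sketch never invokes the hypothesis $t(BB_{k+1,d},M)\leq\delta$ in the concluding step at all, which is a sign the final contradiction is aimed at the wrong object. Relatedly, ``pigeonhole yields a union of atoms $Z$ whose removal eliminates every instance in $M_{\mathrm{str}}$'' overstates what pigeonhole gives: removal eliminates the instances whose atom-tuples have small structured density, and the remaining instances are what the supersaturation step must rule out.
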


Our final tool is a decomposition theorem for bounded functions over $\bb{F}^n_p$, which we use in the case $p=2$. This result requires the most introduction. Although this theorem is essential in our argument we need a surprisingly weak version of it, and so we only introduce the concepts necessary to state this weakening.

Let $\mathbb{T}=\mathbb{R} / \mathbb{Z}$. For $f \colon \mathbb{F}_2^n \to \mathbb{T}$ and $y \in  \mathbb{F}_2^n$, let $(D_y f)(x)\coloneqq f(x+y) - f(x)$ be the {\em  derivative} of $f$ in the direction $y$. Let $d \geq 0$ be an integer. A {\em (nonclassical) polynomial}  of degree at most $d$ is a function $P\colon \mathbb{F}_2^n \to \mathbb{T}$ such that for all $y_1,y_2,\dots ,y_{d+1}, x \in \mathbb{F}_2^n$, we have \[(D_{y_1}D_{y_2}\cdots D_{y_{d+1}}P)(x) = 0.\]
  
A partition $\mathcal{B}$ of $\mathbb{F}_2^n$  is {\em a polynomial factor over $\mathbb{F}_2^n$  of complexity $C$ and degree $d$} if there exists polynomials  $P_1, \dots,P_C:\mathbb{F}_2^n \to \bb{T}$  of degree at most $d$ such that $x,y \in \mathbb{F}_2^n$  belong to the same part of $\mc{B}$ if and only if $P_i(x)=P_i(y)$ for all $i \in [C]$. 

Let $X$ be a finite set, let  $g: X \to  \bb{R}$ and let $\mc{B}$ be a partition of $X$. The \emph{expectation $\e[g|\mc{B}]: X \to  \bb{R}$ of $g$ with respect to $\mc{B}$} is constant  on each part of $B \in \mc{B}$ and equal to the average of $g$ over $B$. 

Finally, the statement of  the decomposition theorem involves a family of \emph{Gowers norms} $\| \cdot \|_{U_d}$ of  functions $\bb{F}^n_2 \to \bb{R}$. We do not need the definition of these norms, but only the following property which is a consequence of ~\cite[Theorem 2.3]{GowWol10} (see also ~\cite[Corollary 11.9]{HHL19} for a strengthening.)

\begin{lem}\label{l:ud} For every pattern $A$, there exists an integer $d = d_{\ref{l:ud}}(A) \geq 0$ such that for every $\eps > 0$, there exists $\delta=\delta_{\ref{l:ud}}(\eps,A) >0$ satisfying the following. Let $V = \bb{F}^n_2  \{0\}$ be a projective binary space. If   $f, g: \bb{F}^n_2 \to [0,1]$ are such that $\|f-g\|_{U_{d+1}} \leq \delta$, then $|t(A,f|_V)-t(A,g|_V)| \leq \eps $.
\end{lem}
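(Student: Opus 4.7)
The plan is to adapt the standard counting-lemma argument from higher order Fourier analysis: telescope the $A$-density over the factors coming from the non-star coordinates of $A$, and then bound each telescoping term using the Gowers--Cauchy--Schwarz inequality for systems of distinct linear forms, which is precisely the content of the cited results of \cite{GowWol10, HHL19}.

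First I would set $m = \dim(A)$, enumerate $V(A) \setminus A^{-1}(\star) = \{x_1, \ldots, x_k\}$, and write $h^{(1)} := h$, $h^{(0)} := 1-h$, so that
\[
t(A, h|_V) \;=\; \e_\phi \prod_{j=1}^{k} h^{(A(x_j))}(\phi(x_j)),
\]
with $\phi$ uniform over linear injections $V(A) \to V$. A uniform linear map $\bb{F}_2^m \to \bb{F}_2^n$ is injective with probability at least $1 - 2^{m-n}$, so for $n$ large enough in terms of $\dim(A)$ and $\eps$, up to an error of $\eps/3$ one can replace $\phi$ by a uniform linear map and parametrise it by $(v_1, \ldots, v_m) \in (\bb{F}_2^n)^m$. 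This turns each $\phi(x_j)$ into a distinct nonzero linear form $\psi_j$ in $v_1, \ldots, v_m$. The elementary telescoping identity $\prod_j F_j - \prod_j G_j = \sum_j (\prod_{i<j} F_i)(F_j - G_j)(\prod_{i>j} G_i)$ then expresses $t(A, f|_V) - t(A, g|_V)$, up to the injectivity correction, as a signed sum of $k$ averages of the form
\[
\e_{v_1, \ldots, v_m} \Bigl[ \prod_{i \neq j} h_i(\psi_i(v)) \cdot (f - g)(\psi_j(v)) \Bigr],
\]
where each $h_i \in \{f, g, 1-f, 1-g\}$ is pointwise bounded by $1$.

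Each such average is the canonical counting functional associated with the system $\Psi = \{\psi_1, \ldots, \psi_k\}$ of distinct nonzero linear forms, with $(f - g)$ inserted in one slot. The Gowers--Wolf theorem \cite[Theorem~2.3]{GowWol10} (see also \cite[Corollary~11.9]{HHL19}) then supplies an integer $d = d(\Psi)$ and a constant $C = C(\Psi)$, both depending only on $A$, such that this functional is bounded in absolute value by $C \|f - g\|_{U^{d+1}}$. Summing the $k$ telescoping terms and choosing $\delta = \eps / (3kC)$ handles the regime of large $n$. The finitely many small values of $n$ left uncovered by the injectivity reduction can be handled separately: one enlarges $d$ (if necessary) so that $d \geq 1$, which by monotonicity of Gowers norms only strengthens the hypothesis and ensures that $\|\cdot\|_{U^{d+1}}$ is a genuine norm on the finite-dimensional space of functions $\bb{F}_2^n \to \bb{R}$, and then appeals to the equivalence of norms together with the continuity of $t(A, \cdot|_V)$ in sup-norm. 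Taking the minimum of the $\delta$'s obtained across all of these cases yields the lemma.

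The main obstacle is really only bookkeeping: the Gowers--Wolf theorem does all the analytic work and directly provides the required quantitative control once the problem is set up as a counting operator for distinct nonzero linear forms. The substantive preliminaries are the telescoping, the reduction from linear injections to arbitrary linear maps, and the separate treatment of small $n$; none of these should present genuine analytic difficulty, but they must be packaged carefully so that the final $\delta$ depends on $\eps$ and $A$ alone and is uniform in $n$.
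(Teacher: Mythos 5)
The paper does not give a proof of this lemma: it states it as a direct consequence of \cite[Theorem 2.3]{GowWol10} and \cite[Corollary 11.9]{HHL19}, leaving the derivation to the reader. Your argument fills in exactly the intended derivation: telescoping across the non-star coordinates, reducing from linear injections to all linear maps at a cost $O(2^{\dim(A)-n})$, applying the Gowers--Wolf counting control to the resulting system of distinct nonzero linear forms over $\bb{F}_2$, and handling the finitely many small $n$ by norm-equivalence in finite dimensions after bumping $d$ so that $U^{d+1}$ is a genuine norm. This is correct, and the small-$n$ reduction is airtight once one notes (as you do) that the equivalence constant may depend on $n$, so one takes the minimum of finitely many $\delta_n$.

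One minor remark: you do not actually need the full strength of the Gowers--Wolf theorem, which is about identifying the \emph{true} complexity of a linear-forms system. After the telescoping, each term is a counting operator with one factor equal to $f-g$ and the others bounded by $1$, over a system of pairwise non-proportional linear forms; the more elementary generalized von Neumann (Gowers--Cauchy--Schwarz) inequality already bounds this by $\|f-g\|_{U^{s+1}}$ where $s$ is the Cauchy--Schwarz complexity, giving $d$ at most the number of non-star points of $A$ minus one. Citing Gowers--Wolf is of course still valid since the paper itself points there, but it is heavier machinery than required.
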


We are now ready to state the weak form of the decomposition theorem from \cite{HHL19} that we use in this paper. 

\begin{thm}[{\cite[Theorem 9.1]{HHL19}}]\label{t:decomp}
	For all $d \in \bb{N}, \delta> 0$ there exists $C=C_{\ref{t:decomp}}(d, \delta) > 0$ satisfying the following. For every $n \in \bb{N}$ and every $g \colon \mathbb{F}_2^n \to [0,1]$ there exists a polynomial factor $\mc{B}$ over $\mathbb{F}_2^n$  of complexity $C$ and degree $d$ such that
	$$\|\: g-\e[g|\mc{B}]\:\|_{U_{d+1}} \leq \delta.$$
\end{thm}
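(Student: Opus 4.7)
The plan is to establish \cref{t:decomp} by the classical energy-increment method, iteratively refining polynomial factors until the error has small $U_{d+1}$-norm. Start with the trivial factor $\mc{B}_0$ consisting of a single part, so that $\e[g|\mc{B}_0]$ is the constant function $\e[g]$. At stage $i$, if the current error satisfies $\|g-\e[g|\mc{B}_i]\|_{U_{d+1}}>\delta$, the goal is to refine $\mc{B}_i$ into a polynomial factor $\mc{B}_{i+1}$ of the same degree and complexity one larger, in such a way that the $L^2$-mass of the projection $\e[g|\mc{B}_{i+1}]$ increases by a definite quantity $\eta=\eta(\delta,d)>0$ depending only on $\delta$ and $d$.

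The refinement step is driven by the inverse theorem for Gowers uniformity norms over $\bb{F}_2^n$ due to Tao and Ziegler: if a $[0,1]$-valued function $h$ satisfies $\|h\|_{U_{d+1}}>\delta$, then there exists a nonclassical polynomial $P\colon\bb{F}_2^n\to\bb{T}$ of degree at most $d$ with
$$\left|\e_x\bigl[h(x)\,e^{2\pi i P(x)}\bigr]\right|\geq \eta(\delta,d).$$
Applied to $h=g-\e[g|\mc{B}_i]$, this yields a polynomial $P_i$; we then take $\mc{B}_{i+1}$ to be the polynomial factor whose defining polynomials are those of $\mc{B}_i$ together with $P_i$, which is again of degree at most $d$. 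Since $e^{2\pi i P_i}$ is constant on the atoms of $\mc{B}_{i+1}$, the inner product $\langle g-\e[g|\mc{B}_i],e^{2\pi i P_i}\rangle$ equals $\langle \e[g|\mc{B}_{i+1}]-\e[g|\mc{B}_i],e^{2\pi i P_i}\rangle$, and Cauchy--Schwarz combined with the nested-projection identity $\|\e[g|\mc{B}_{i+1}]\|_2^2-\|\e[g|\mc{B}_i]\|_2^2=\|\e[g|\mc{B}_{i+1}]-\e[g|\mc{B}_i]\|_2^2$ gives
$$\|\e[g|\mc{B}_{i+1}]\|_2^2 \;\geq\; \|\e[g|\mc{B}_i]\|_2^2 + \eta(\delta,d)^2.$$
Since $g$ takes values in $[0,1]$ we have $\|\e[g|\mc{B}_i]\|_2^2\leq 1$ throughout, so the iteration must stop after at most $\lceil\eta(\delta,d)^{-2}\rceil$ steps, and the final factor has complexity $C=C(d,\delta)$ as required.

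The main obstacle is precisely the inverse theorem for $U_{d+1}$-norms in characteristic two. For $d=1$ it reduces to a short Fourier-analytic computation, but for $d\geq 2$ it is a deep result that fundamentally requires \emph{nonclassical} polynomials as correlating phases; the naive attempt to obtain correlation only with classical polynomials of degree $d$ fails over $\bb{F}_2^n$, which is the reason the notion of polynomial factor in the statement is formulated using nonclassical polynomials. Once the inverse theorem is granted as a black box, the remaining pieces of the argument---the energy increment, the termination bound, and the verification that appending one more degree-$d$ polynomial keeps $\mc{B}_{i+1}$ a polynomial factor of degree $d$ with complexity $C+1$---are routine.
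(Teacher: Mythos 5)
The paper does not prove this statement; it is cited verbatim as Theorem~9.1 of~\cite{HHL19}, so there is no ``paper's own proof'' to compare against. Your reconstruction is the standard energy-increment argument for such arithmetic regularity/decomposition theorems, and it is essentially the argument used in~\cite{HHL19}: iterate the Tao--Ziegler $U_{d+1}$-inverse theorem over $\bb{F}_2^n$, append the correlating nonclassical polynomial of degree at most $d$ to the factor, and use the Pythagorean energy increment to bound the number of iterations by $O(\eta(\delta,d)^{-2})$. One small imprecision: when you invoke the inverse theorem you state it for a $[0,1]$-valued function, but you apply it to $h=g-\e[g|\mc{B}_i]$, which takes values in $[-1,1]$; the inverse theorem in fact applies to any $1$-bounded function, so this is merely a wording issue. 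Likewise, the resulting factor has complexity \emph{at most} a constant depending on $d,\delta$, and one pads with trivial polynomials to reach complexity exactly $C$, which you implicitly assume. With these cosmetic fixes your argument is correct and is the same proof as in the cited source.
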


Given a function  $f: X \to  \bb{R}$ we say that $g$ is \emph{$f$-structured}  if for every $a \in \brm{Im}(f)$ we have $$\e[g(x)|f(x)=a] = a.$$ That is, $f=\e[g|\mc{B}]$ where $\mc{B}=\{f^{-1}(a)\}_{a \in \brm{Im}(f)}$. Let $\mc{M}(f)$ denote the set of all $f$-structured functions $M: X \to \{0,1\}$.

We finish this section by deriving from \cref{t:decomp} a lemma which is the cornerstone of our approach. It states that for every pattern $A$ and every binary projective space $V$, there exists a small collection of functions, which are close to being $A$-free, such that every $A$-free matroid $M: V \to \{0,1\}$ is $f$-structured for some $f$ in this collection.
  
First, we need a bound on the number of polynomial factors of given degree and complexity, which follows from the following characterization of polynomials of given degree, due to Tao and Ziegler~\cite{TaoZie12}.

\begin{lem}[{\cite[Lemma 1.6 (iii)]{TaoZie12}}]\label{l:poly}
	Let $P\colon \mathbb{F}_2^n \to \mathbb{T}$ be a polynomial of degree at most $d$. Then $$P(x_1,\ldots,x_n) = \alpha + \sum_{\substack{I \subseteq [n], j \in \bb{Z}_+ \\ |I|+j \leq d}} \frac{c_{I,j}}{2^j}\prod_{i \in I}|x_i| \qquad (\brm{mod} 1),$$
	for some  choice of coefficients $\alpha \in \bb{T}$ and $c_{I,j} \in \{0,1\}$,
	where $x \to |x|$ is the natural map $\bb{F}_2 \to \{0,1\}$.
\end{lem}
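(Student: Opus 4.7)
The plan is to prove the lemma in three stages. First I would verify that each monomial $\frac{1}{2^j}\prod_{i\in I}|x_i|$ with $|I|+j\leq d$ is itself a non-classical polynomial of degree at most $d$, handling the easy direction. Second, I would show by induction on $d$ that every polynomial of degree $\leq d$ admits a representation as a $\mathbb{Z}$-linear combination of these basis monomials modulo $1$, plus a constant in $\mathbb{T}$. Third, I would reduce the integer coefficients to $\{0,1\}$ by exploiting the identity $\frac{2}{2^j}\equiv\frac{1}{2^{j-1}}\pmod{1}$.

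The cornerstone computation for the first step is $|x_k+1|-|x_k|=1-2|x_k|$ in $\mathbb{Z}$, yielding, for $k\in I$,
\[
D_{e_k}\!\left(\tfrac{1}{2^j}\prod_{i\in I}|x_i|\right) \equiv \tfrac{1}{2^j}\prod_{i\in I\setminus\{k\}}|x_i| - \tfrac{1}{2^{j-1}}\prod_{i\in I}|x_i| \pmod{1},
\]
while $D_{e_k}$ kills the monomial when $k\notin I$. Since each $D_y$ is a polynomial combination of the $D_{e_k}$'s via the identity $D_{y+z}=D_y+D_z+D_yD_z$, iterated differentiation strictly decreases the invariant $|I|+j$ until it reaches $0$, at which point the resulting constant vanishes modulo $1$ after one further derivative. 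This bounds the degree by $|I|+j\leq d$.

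For the converse I would induct on $d$. The base case $d=0$ records that constants $\mathbb{F}_2^n\to\mathbb{T}$ are exactly the functions $\alpha\in\mathbb{T}$. For the inductive step, given $P$ of degree $\leq d$, each coordinate derivative $D_{e_k}P$ has degree $\leq d-1$, so by hypothesis is representable in the claimed form. Reading the derivative formula above in reverse, I would build a candidate antiderivative $\widetilde{P}$ as a $\mathbb{Z}$-linear combination of monomials with $|I|+j\leq d$ designed so that $D_{e_k}\widetilde{P}=D_{e_k}P$ for every $k$. Then $P-\widetilde{P}$ is annihilated by every coordinate derivative and so is a constant, which is absorbed into $\alpha$. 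Finally, the reduction from $\mathbb{Z}$ to $\{0,1\}$ coefficients proceeds by a cascade: write $c_{I,j}=\varepsilon+2c'$ with $\varepsilon\in\{0,1\}$, split $\frac{c_{I,j}}{2^j}$ as $\frac{\varepsilon}{2^j}+\frac{c'}{2^{j-1}}$, and iterate, with the residual overflow at the $j=1$ level absorbed into $\alpha$.

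The main obstacle will be the inductive step: ensuring that the family $\{D_{e_k}P\}_k$ supplied by the hypothesis actually arises from a single antiderivative requires enforcing the compatibility conditions $D_{e_k}D_{e_\ell}P=D_{e_\ell}D_{e_k}P$. Commutativity of shifts guarantees these at the level of $P$, but matching coefficients across the different representations of $D_{e_k}P$ when assembling $\widetilde{P}$ demands careful bookkeeping, especially at the boundary $j=1$ where $\frac{1}{2^{j-1}}\prod_{i\in I}|x_i|\equiv 0\pmod{1}$ causes the recursive formula to collapse and must be tracked separately.
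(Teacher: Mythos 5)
The paper does not prove this lemma; it is quoted from Tao--Ziegler and used as a black box, so there is no in-paper argument to compare yours against. A few comments on the sketch itself.

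Your forward direction is essentially right, and the derivative identity for $D_{e_k}$ acting on a monomial is exactly the key computation. Note, though, that it shows the monomial $\tfrac{1}{2^j}\prod_{i\in I}|x_i|$ has degree \emph{exactly} $|I|+j-1$, not $|I|+j$. This off-by-one is not cosmetic: the statement as transcribed in the paper is stronger than what Tao--Ziegler prove, and is already false for $d=1$. The function $P(x)=\tfrac{|x_1|}{2}$ satisfies $D_yD_zP\equiv 0$ and so has degree $1$, yet the only monomial permitted by $|I|+j\le 1$, $j\ge 1$ is the constant $\tfrac12$, and $P$ is not constant. The correct constraint is $|I|+j\le d+1$ (equivalently Tao--Ziegler's $j\ge 0$ with denominator $2^{j+1}$). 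This does not affect the paper, which only needs a polynomial-in-$n$ bound in Corollary~\ref{c:numPoly}, but if you prove the literal statement you will necessarily get stuck.

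The converse half of your plan has a genuine gap, which you flag but do not close: the representation of each $D_{e_k}P$ provided by the inductive hypothesis is non-unique, and the collapse of $\tfrac{1}{2^{j-1}}\prod_{i\in I}|x_i|$ to $0\pmod 1$ at $j=1$ means that ``reading the derivative formula in reverse'' loses information. Assembling a single antiderivative $\widetilde P$ from the family $\{D_{e_k}P\}_k$ is therefore not a well-defined step without a compatibility argument that the sketch does not supply.

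A route that avoids this obstacle entirely: since $2e_k=0$ one has $D_{e_k}^2=-2D_{e_k}$, hence $D_{e_k}^m=(-2)^{m-1}D_{e_k}$. If $\deg P\le d$ and $\emptyset\ne I\subseteq[n]$ with $|I|\le d$, taking $m=d-|I|+2$ repeats of some $D_{e_i}$, $i\in I$, together with the $D_{e_\ell}$ for $\ell\in I\setminus\{i\}$ gives $d+1$ derivatives, whence $0=(-2)^{d-|I|+1}D_{e_I}P$ and so $D_{e_I}P$ is $2^{d-|I|+1}$-torsion, where $D_{e_I}:=\prod_{i\in I}D_{e_i}$. Writing $b_I:=D_{e_I}P(0)\in\tfrac{1}{2^{d-|I|+1}}\mathbb{Z}/\mathbb{Z}$ and expanding in binary yields $b_I=\sum_{j=1}^{d-|I|+1}\tfrac{c_{I,j}}{2^j}$ with $c_{I,j}\in\{0,1\}$, and the exponents automatically satisfy $|I|+j\le d+1$. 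The finite-difference identity $P(x)=\sum_{I\subseteq[n]}b_I\prod_{i\in I}|x_i|$, which holds for any map from $\mathbb{F}_2^n$ to an abelian group, then produces the desired expansion with $\alpha=b_\emptyset=P(0)$. This exploits the torsion structure of $\mathbb{T}$ directly, sidestepping both the compatibility issue and the $\mathbb{Z}$-to-$\{0,1\}$ coefficient cascade in your third stage.
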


\begin{cor}\label{c:numPoly}
	Let $d,C,n$ be positive integers. Then there are at most $n^{dC}$ distinct polynomial factors over $\mathbb{F}_2^n$  of complexity $C$ and degree  $d$. Moreover, $|\mc{B}| \leq 2^{dC}$ for each such factor $\mc{B}$. 
\end{cor}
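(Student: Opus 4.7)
Both claims follow from applying the Tao--Ziegler normal form of \cref{l:poly} to enumerate degree-$\leq d$ polynomials on $\bb{F}_2^n$, and I would handle them in the order stated.

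The bound $|\mc{B}|\leq 2^{dC}$ is the easy half. By \cref{l:poly}, every polynomial of degree at most $d$ takes values in the cyclic subgroup $\tfrac{1}{2^d}\bb{Z}/\bb{Z}\subset \bb{T}$, which has cardinality $2^d$. Hence the joint evaluation $(P_1,\ldots,P_C)\colon \bb{F}_2^n\to\bb{T}^C$ that defines $\mc{B}$ has image of size at most $(2^d)^C=2^{dC}$, and since the fibers of this map are by definition the atoms of $\mc{B}$, the claim follows.

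For the count of factors, my first step would be to observe that shifting any $P_i$ by a constant in $\bb{T}$ leaves its level-set partition unchanged, so a factor is determined by a $C$-tuple of polynomials normalized to have vanishing constant term. Invoking \cref{l:poly} once more, each such normalized polynomial is uniquely specified by a family of binary coefficients $c_{I,j}\in\{0,1\}$ indexed by pairs $(I,j)$ with $I\subseteq[n]$, $j\geq 1$, and $|I|+j\leq d$. A direct enumeration of these coefficient vectors supplies a bound on the number of degree-$\leq d$ polynomials modulo constant shifts, and raising to the $C$-th power then bounds the number of admissible $C$-tuples and hence the number of factors by $n^{dC}$.

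The plan involves no genuine conceptual obstacle: both claims reduce to reading off combinatorial information from \cref{l:poly}. The only mildly subtle point is the normalization step, i.e.\ recognizing that the constant term $\alpha$ may be discarded before counting, without which one would overcount by a factor of order $2^{dC}$. The remaining enumeration of index pairs $(I,j)$ is a standard calculation that I would not grind through in the proof itself.
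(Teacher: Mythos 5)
You follow the paper's route for both claims: use \cref{l:poly} to bound (i) the number of values a degree-$\leq d$ polynomial can take, and (ii) the number of such polynomials modulo constant shifts. Your argument for $|\mc{B}|\leq 2^{dC}$ matches the paper's essentially verbatim and is correct (your phrasing that such polynomials take values \emph{in} $\frac{1}{2^d}\bb{Z}/\bb{Z}$ is slightly off because of the free constant $\alpha\in\bb{T}$, but the count of at most $2^d$ distinct values is right).

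The bound $n^{dC}$, however, is where a genuine problem lies---in your proposal and, as it happens, in the paper's own proof, which makes the same slip. You write that a direct enumeration of the coefficient vectors $(c_{I,j})$, raised to the $C$-th power, ``bounds the number of factors by $n^{dC}$,'' and you explicitly decline to grind through the enumeration. Grinding through it is exactly where the argument fails: the index set of pairs $(I,j)$ with $I\subseteq[n]$, $j\geq 1$, $|I|+j\leq d$ has size on the order of $n^{d-1}$ (the paper's bound $d\binom{n}{d}\leq n^d$ on its size is fine), and each $c_{I,j}$ ranges independently over $\{0,1\}$. So the number of degree-$\leq d$ polynomials modulo constant shifts is exponential in $n^{d-1}$, not polynomial in $n$, and raising to the $C$-th power gives roughly $2^{C\cdot n^{d-1}}$ factors. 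The paper's sentence conflates ``at most $n^d$ coefficients $c_{I,j}$'' with ``at most $n^d$ choices of coefficient vector''; the latter should read $2^{n^d}$. You inherit the conflation when you jump from ``enumeration of these coefficient vectors'' to ``$n^{dC}$.'' Already for $C=1$ and $d=2$ the claim is visibly false: the resulting level-set partitions of $\bb{F}_2^n$ are exactly the coset partitions by linear hyperplanes together with the trivial partition, of which there are $2^n$, far exceeding $n^{dC}=n^2$.

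The damage to the paper is contained: the correct weaker bound of at most $2^{n^{dC}}$ factors is all the downstream arguments actually need. In the proof of \cref{t:bd} the conclusion would become $m\leq 2^{\dim(V)^C}$ rather than $m\leq 2^{C\cdot\dim(V)}$, and the only place $m$ is used afterward is via $\log m$ (in the estimate $h(n,\mc{T})\leq\eps 2^n+\log m$ in the proof of \cref{t:thin} and in \eqref{e:hnr}), where any $\log m = n^{O(1)}$ suffices. If you present a proof of this corollary, you should prove the bound $2^{n^{dC}}$; the stated $n^{dC}$ is not attainable by this (or, I believe, any) argument.
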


\begin{proof}
	By \cref{l:poly} each polynomial $P\colon \mathbb{F}_2^n \to \mathbb{T}$ of degree at most $d$ is determined by the choice of at most $d\binom{n}{d}  \leq n^d$ coefficients $c_{I,j}$, up to a constant shift.
	
	As every polynomial factor  of complexity $C$ and degree $d$ is determined by a $C$-tuple of such polynomials, which we may assume to be homogeneous without loss of generality, there are at most  $(n^{d})^{C}$ distinct polynomial factors over $\mathbb{F}_2^n$  of complexity $C$ and degree  $d$, as required.
	
	Moreover, \cref{l:poly} implies that each polynomial of degree at most $d$ takes at most $2^d$ distinct values. As every part of a polynomial factor $\mc{B}$ is determined by a $C$-tuple   of values of such polynomials, we also get the claimed upper bound on $|\mc{B}|$. 
\end{proof}

The next lemma shows that for every pattern $A$, the set of all $A$-free matroids can be covered with a union of a few  $\mc{M}(f_i)$, where each $f_i$ is almost $A$-free.  
\begin{lem}\label{t:bd}
	For every pattern  $A$ and $\delta > 0$ there exists $C=C_{\ref{t:bd}}(A,\delta)>0$ satisfying the following. For every  binary projective space $V$ there exist functions $f_1,f_2,\ldots,f_m: V \to [0,1]$ such that\begin{itemize}
		\item $m \leq 2^{C \cdot \dim(V)}$,
		\item $t(A,f_i) \leq \delta$ for every $i \in [m]$,
		\item for every $A$-free $M : V \to \{0,1\}$ there exists $i \in [m]$ such that $M \in \mc{M}(f_i).$ 
	\end{itemize} 	 
\end{lem}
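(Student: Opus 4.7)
The plan is to apply the decomposition theorem (\cref{t:decomp}) to each $A$-free matroid $M \colon V \to \{0,1\}$, restrict the resulting polynomial factor of $\mathbb{F}_2^n$ (with $n = \dim V$) to $V$, and take $f$ to be the conditional expectation of $M$ on this restricted partition. This forces $M \in \mathcal{M}(f)$ automatically, so the real content is bounding $t(A,f)$ and counting how many $f$ can arise. Set $d = d_{\ref{l:ud}}(A)$, $\delta' = \delta_{\ref{l:ud}}(\delta/4, A)$, $C' = C_{\ref{t:decomp}}(d, \delta')$, and fix a constant $n_0$ large enough that $|V(A)|^2/(2^{n_0}-1) \le \delta/2$. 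For $\dim V \le n_0$ I would include every function $V \to \{0,1\}$ in the collection (a constant number, absorbed into the final $C$). For $\dim V > n_0$, I would enumerate all pairs $(\mathcal{B}, a)$ where $\mathcal{B}$ is a polynomial factor of $\mathbb{F}_2^n$ of complexity at most $C'$ and degree at most $d$, and $a$ assigns to each nonempty $B\cap V$ a value in $\{k/|B\cap V| : 0 \le k \le |B\cap V|\}$; let $f_{\mathcal{B},a}$ be constant on $B \cap V$ with value $a(B\cap V)$, and keep only those pairs with $t(A, f_{\mathcal{B},a}) \le \delta$.

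Given an $A$-free $M$ with $\dim V = n > n_0$, I would extend $M$ by $\hat{M}(0) = 0$ to a function $\mathbb{F}_2^n \to \{0,1\}$, and apply \cref{t:decomp} to $\hat{M}$ to obtain $\mathcal{B}$ and $\tilde{f} = \mathbb{E}[\hat{M} \mid \mathcal{B}]$ with $\|\hat{M} - \tilde{f}\|_{U_{d+1}} \le \delta'$. Setting $a(B\cap V) = \mathbb{E}[M \mid B\cap V]$ yields $f_{\mathcal{B},a} = \mathbb{E}[M \mid \mathcal{B}|_V]$, and then $M \in \mathcal{M}(f_{\mathcal{B},a})$ directly from the definition of conditional expectation. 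Since $M$ is $A$-free we have $t(A, \hat{M}|_V) = 0$, so \cref{l:ud} gives $t(A, \tilde{f}|_V) \le \delta/4$. The functions $f_{\mathcal{B},a}$ and $\tilde{f}|_V$ agree outside $B_0 \cap V$, where $B_0 \in \mathcal{B}$ is the part containing $0$; on $B_0 \cap V$ a short computation using $\hat{M}(0)=0$ yields $|f_{\mathcal{B},a} - \tilde{f}|_V| \le 1/|B_0|$ pointwise. Since the integrand of $t(A, \cdot)$ is $|V(A)|$-Lipschitz in $\ell_\infty$ for each random injection, and $\Pr_\phi[\phi(V(A)) \cap (B_0 \cap V) \ne \emptyset] \le |V(A)| \cdot |B_0|/|V|$ by a union bound, the two factors of $|B_0|$ cancel to give $|t(A, f_{\mathcal{B},a}) - t(A, \tilde{f}|_V)| \le |V(A)|^2/|V| \le \delta/2$, hence $t(A, f_{\mathcal{B},a}) \le \delta$, so $f_{\mathcal{B},a}$ survives the filtering and covers $M$.

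For the size bound, \cref{c:numPoly} yields at most $n^{dC'}$ factors, each with at most $2^{dC'}$ parts, and for each part at most $|V|+1 \le 2^n$ choices of $a$, so the number of pairs is at most $n^{dC'} \cdot (2^n)^{2^{dC'}} \le 2^{Cn}$ with $C$ depending only on $A$ and $\delta$ (enlarged if necessary to absorb the $\dim V \le n_0$ case). I expect the main obstacle to be the perturbation step in the second paragraph: the decomposition theorem controls $\tilde{f}|_V = \mathbb{E}[\hat{M}\mid\mathcal{B}]|_V$, but $f$-structure of $M$ requires averaging with respect to $\mathcal{B}|_V$ instead, and a priori the exceptional part $B_0$ could span a constant fraction of $V$. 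The cancellation between the $\ell_\infty$ error $1/|B_0|$ and the hit probability $|V(A)| \cdot |B_0|/|V|$ is what salvages the estimate, and identifying this balance is the key technical move.
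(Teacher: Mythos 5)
Your proposal is correct and takes essentially the same route as the paper: apply \cref{t:decomp} to the extension $\hat M$ with $\hat M(0)=0$, take $f=\e[M\mid\mathcal B|_V]$, and then control the discrepancy between $\e[\hat M\mid\mathcal B]|_V$ and $\e[M\mid\mathcal B|_V]$, which differ only on the part containing $0$. The only cosmetic difference is in the final perturbation estimate: you combine an $\ell_\infty$ bound of $1/|B_0|$ on $B_0\cap V$ with a hit-probability union bound, giving $|V(A)|^2/|V|$, whereas the paper observes that the total (unnormalized) $\ell_1$ distance between the two averages is at most $1$ and invokes the cleaner inequality $|t(A,f)-t(A,f_0|_V)|\le\frac{|V(A)|}{|V|}\|f-f_0|_V\|_1\le\frac{|V(A)|}{|V|}$; both suffice.
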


\begin{proof}Let $d = d_{\ref{l:ud}}(A)$,   $\delta'= \delta_{\ref{l:ud}}(A, \delta/2)$, $C' =C_{\ref{t:decomp}}(d, \delta')$ and $$C=\max\left\{\frac{2|V(A)|}{\delta},\; dC'+2^{dC'}\right\}.$$
	
	Let $n = \dim(V)$.  If $|V| \leq \frac{2|V(A)|}{\delta}$ then $2^{C \cdot \dim(V)} \geq 2^{|V|}$. In this case the set $f_1,f_2,\ldots,f_m$ of all $A$-free matroids $M : V \to \{0,1\}$ satisfies the lemma. Thus we assume that \begin{equation}\label{e:bdv}|V| \geq \frac{2|V(A)|}{\delta}.\end{equation}
	 We identify $V$ with $\bb{F}^n_2 \setminus \{0\}$. Let $\mc{F}$ be the set of functions $f: V \to [0,1]$ such that there exists a polynomial factor $\mc{B}$ of degree $d$ and complexity $C'$ over $\bb{F}^n_2$ and a matroid $M: V \to \{0,1\}$ such that $f=\e[M|\mc{B}|_V]$.
	
	In particular, $f$ is completely determined by $\mc{B}$ and $\{ \sum_{x \in B}M(x) \}_{B \in \mc{B}|_V}$. This observation together with  \cref{c:numPoly} implies that
		$$|\mc{F}| \leq n^{dC'} \cdot (2^n)^{2^{dC'}} \leq 2^{(dC' + 2^{dC'})n} = 2^{Cn}.$$
			
	Thus it remains to show that for every $A$-free $M : V \to \{0,1\}$  there  exists $f \in \mc{F}$ such that $M$ is $f$-measurabl and $t(A,f|_V) \leq \delta$. This essentially follows from our choice of $C'$ to satisfy the conditions of \cref{t:decomp}, except for a minor technical difficulty: \cref{t:decomp} applies to functions $\bb{F}^n_2 \to [0,1]$ and $V$ differs from  $\bb{F}^n_2$ by one element. 
	
	We resolve this issue as follows. Let $M_0: \bb{F}^n_2 \to \{0,1\}$ be such that $M_0(0)=0$ and $M_0|_V = M$. By the choice of $C'$ there exists a polynomial factor $\mc{B}$ of complexity $C'$ and degree $d$ over $\bb{F}^n_2$ such that $\|M_0 - f_0\|_{U_{d+1}} \leq \delta'$, where $f_0=\e[M_0 |\mc{B}]$.
	 
	As $t(A,M)=0$ by the choice of $d$ and $\delta'$ it follows that 
	$t(A,f_0|_V) \leq \delta/2.$
	Let $f= \e[M |\mc{B}|_V]$ then   $f \in \mc{F}$, $M$ is $f$-structured, and $\|f-f_0|_V \|_1 \leq 1$. 
	It follows that
	$$t(A,f) \leq t(A,f_0|_V) + \frac{|V(A)|}{|V|}\|f-f_0|_V \|_1 \stackrel{\eqref{e:bdv}}{\leq} \delta,$$ 
	and so $f$ is as required.
\end{proof}

\section{Critical number and $k$-affine patterns}\label{sec:kaffine}

The main result of this section, \cref{c:kAffine} shows that the critical number of a hereditary property of binary matroids can be alternatively defined as the maximum $k$ such that  for every  $k$-affine pattern $B$ some  matroid in $\mc{P}$ contains a $B$-instance. For $k=1$ this implies the equivalence of \cref{t:thin} (a) and (b).

First, we need a packing result for rooted subspaces of a binary projective space.

\begin{lem}\label{l:extensions}
	Let $U \subset W \subset V$ be binary projective spaces. Let $d=\dim(V)-\dim(W)+\dim(U)$. Then there exist subspaces $U_1,U_2,\ldots,U_{m}$ of $V$ such that \begin{itemize} \item $m = 2^{\dim(V)-2d}$, \item  $\dim(U_i)=d$ and $U_i \cap W = U$  for every $i \in [m]$,  and \item $U_i \cap U_j = U$ for every $\{i,j\} \subseteq [m]$.
		\end{itemize} 
\end{lem}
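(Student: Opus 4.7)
The natural approach is to pass to the quotient $\widetilde{V}/\widetilde{U}$ and reduce the problem to packing pairwise trivially intersecting complements of a subspace, then realise these complements via a Gabidulin-style finite-field construction.

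\textbf{Reduction to the case $U=\emptyset$.} Let $\widetilde{U}\subseteq \widetilde{W}\subseteq \widetilde{V}$ be the underlying $\bb{F}_2$-vector spaces, and let $\pi\colon \widetilde{V}\to \widetilde{V}/\widetilde{U}$ be the quotient map. Set $V' = \widetilde{V}/\widetilde{U}$, $W' = \widetilde{W}/\widetilde{U}$, and write $N = \dim(V)-\dim(U)$, $K = \dim(W)-\dim(U)$. Note that $d-\dim(U) = N-K$. The correspondence $U_i \leftrightarrow X_i := \pi(U_i)$ is a bijection between subspaces of $V$ containing $U$ and subspaces of $V'$, and under it the conditions $U_i\cap W = U$ and $U_i\cap U_j = U$ translate to $X_i\cap W' = 0$ and $X_i\cap X_j = 0$. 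It therefore suffices to exhibit subspaces $X_1,\ldots,X_m\subseteq V'$ of dimension $N-K$, pairwise intersecting trivially, each intersecting $W'$ trivially; then $U_i := \pi^{-1}(X_i)$ has dimension $(N-K)+\dim(U) = d$ and inherits all the required properties.

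\textbf{Graph parametrisation of complements.} Fix any complement $Y$ of $W'$ in $V'$, so $V' = W'\oplus Y$ and $\dim Y = N-K$. Every $(N-K)$-dimensional subspace of $V'$ meeting $W'$ trivially is the graph $X_T := \{y + T(y)\,:\,y\in Y\}$ of a unique $\bb{F}_2$-linear map $T\colon Y\to W'$. A direct computation using the direct-sum decomposition shows $X_T\cap X_S = \{y+T(y)\,:\,(T-S)(y)=0\}$, so $X_T\cap X_S = 0$ iff $T-S$ is injective. The problem reduces to producing $m$ linear maps $Y\to W'$ whose pairwise differences are all injective.

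\textbf{Gabidulin-type construction.} Since $m\geq 1$ forces $\dim(V)-2d\geq 0$, a direct rewriting gives $N-K\leq K$, so $Y$ admits an $\bb{F}_2$-linear injection $\iota\colon Y\hookrightarrow \bb{F}_{2^K}$, after identifying $W'$ with the field $\bb{F}_{2^K}$. For each $\alpha\in \bb{F}_{2^K}$ define $T_\alpha\colon Y\to W'$ by $T_\alpha(y) = \alpha\cdot \iota(y)$, where the product is taken in $\bb{F}_{2^K}$; each $T_\alpha$ is $\bb{F}_2$-linear. Moreover $T_\alpha-T_\beta = T_{\alpha-\beta}$, and for $\alpha\neq\beta$ this map is injective because multiplication by a nonzero element of $\bb{F}_{2^K}$ is a bijection and $\iota$ is injective. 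Finally, $m = 2^{\dim(V)-2d}\leq 2^K$ (which reduces to the trivial $\dim(W)\leq \dim(V)+\dim(U)$), so we may choose $\alpha_1,\ldots,\alpha_m \in \bb{F}_{2^K}$ distinct; the subspaces $X_{T_{\alpha_1}},\ldots,X_{T_{\alpha_m}}$ then lift to the desired $U_1,\ldots,U_m$.

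\textbf{Main obstacle.} There is no serious obstacle; the one nontrivial ingredient is recognising that producing many linear maps with pairwise injective differences is solved by a Gabidulin-style construction via a finite-field extension. A probabilistic/greedy argument would also yield a family of the required size, but the finite-field construction is cleaner and entirely explicit.
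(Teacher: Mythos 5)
Your proof is correct, and it takes a genuinely different route from the paper's. The paper argues nonconstructively: it takes a maximal collection $U_1,\ldots,U_m$ of $d$-dimensional subspaces with $U_i \cap W = U$ and pairwise intersection $U$, draws a uniformly random further candidate $U'$ with $U'\cap W=U$, computes $\Pr[v\in U'] = 2^{\dim(U)-\dim(W)}$ for $v\in V\setminus W$ via a transitivity-of-automorphisms argument, and shows by a first-moment bound that if $m < 2^{\dim(V)-2d}$ the expected overlap of $U'$ with $\bigcup_i (U_i\setminus W)$ is below $1$, contradicting maximality. Your argument is instead fully explicit: you reduce modulo $U$ to packing pairwise-trivially-intersecting complements of $W'$ in $V'$, parametrise these as graphs of $\bb{F}_2$-linear maps $Y\to W'$ (so the intersection condition becomes injectivity of pairwise differences), and exhibit such a family via the Gabidulin/rank-metric construction $T_\alpha = \alpha\cdot\iota(\cdot)$ over $\bb{F}_{2^K}$. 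Your inequalities all check out: the needed $N-K\le K$ and $m\le 2^K$ do follow from $\dim(V)-2d\ge 0$ and $\dim(W)\le\dim(V)$ respectively, and your construction in fact produces $2^{\dim(W)-\dim(U)}$ pairwise-trivially-intersecting subspaces, which is at least (and typically strictly more than) the $2^{\dim(V)-2d}$ required. The paper's method is shorter to state but relies on a small symmetry computation; yours is longer to set up but constructive and, as a side effect, gives a sharper count.
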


\begin{proof} Let $d'=\dim(U)$, $d''=\dim(W)$, then $\dim(V)= d'' + d-d'$.
	Let $U_1,U_2,\ldots,U_{m}$ be a maximal collection of subspaces satisfying the last two conditions of the lemma. Let a subspace $U'$ of $V$ satisfying $\dim(U')=d$ and $U' \cap W = U$ be chosen uniformly at random. Note that for every pair $v_1,v_2 \in V - W$ there exists an automorphism  of $V$ that acts trivially on $W$ and maps $v_1$ to $v_2$. Thus $$\Pr[v \in U'] = \frac{2^{d} - 2^{d'}}{2^{d'' +d-d'} - 2^{d''}}={2}^{d' -d''}$$
	for every $v \in V - W$.
	It follows that
	$$\e[|(U' \cap \cup_{i \in [k]} U_i)-W|]= m(2^{d} - 2^{d'}){2}^{d' -d''} \leq m \cdot2^{d'+d - d''}$$
	If $m< 2^{d'' -d'-d}$ then the above expectation is less than one. Thus in this case there exists a choice
	of $U'$ such that $U' \cap U_i = U$ for every $i \in [k]$, and the collection  $U_1,U_2,\ldots,U_{m}, U'$ contradicts maximality of $U_1,U_2,\ldots,U_{m}$. It follows that $m \geq 2^{d'' -d'-d} =  2^{\dim(V)-2d}$, as desired.
\end{proof}

\begin{lem}\label{c:countExt} For every $d \in \bb{N}$ there exists $\eps=\eps_{\ref{c:countExt}}(d)$ satisfying the following. Let $n \geq d \geq k \geq 1$ be integers. Let $V$ be a binary projective space with $\dim(V)=n$, let $W \in \Sub(k,V)$, and let $N$ be a pattern with $\dim(N)=d-k$. Let $M$   be a matroid with $V(M)=W$ containing an $N$-instance. Then for every $N' \in \brm{Ext}^k(N)$ there are at most $$2^{2^n(1 -2^{-k}-\eps)}$$  $N'$-free extensions of $M$ to $V$.
\end{lem}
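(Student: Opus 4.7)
The strategy is to use the single $N$-instance inside $M$ to manufacture many pairwise ``independent'' candidate $N'$-instances on any extension of $M$, each of which contributes a multiplicative saving when we require $N'$-freeness.

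First, denote the $N$-instance in $M$ by $\phi_0 \colon V(N) \to W$ with image $U = \phi_0(V(N))$, a subspace of $W$ of dimension $d-k$. Apply \cref{l:extensions} to the nested triple $U \subseteq W \subseteq V$: the dimension parameter of that lemma equals $n - (n-k) + (d-k) = d$, so we obtain $m = 2^{n-2d}$ subspaces $U_1, \ldots, U_m \subseteq V$, each of dimension $d$, with $U_i \cap W = U$ and $U_i \cap U_j = U$ for $i \neq j$. Since the codimension of $V(N)$ in $V(N')$ equals the codimension of $U$ in $U_i$, we may fix, for each $i$, a linear isomorphism $\phi_i \colon V(N') \to U_i$ with $\phi_i|_{V(N)} = \phi_0$. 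Each $\phi_i$ is then a linear injection $V(N') \to V$ extending $\phi_0$.

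Any $N'$-free extension $M^* \colon V \to \{0,1\}$ of $M$ must, in particular, fail to make each $\phi_i$ an $N'$-instance. Whether $\phi_i$ is an $N'$-instance depends only on the values of $M^*$ on $\phi_i(V(N') \setminus V(N)) = U_i \setminus U$, because the values on $\phi_i(V(N)) = U$ already agree with $N$ (and hence with $N'|_{V(N)}$). Since the sets $U_i \setminus U$ are pairwise disjoint and contained in $V \setminus W$, the events ``$\phi_i$ is an $N'$-instance'' are \emph{independent} with respect to a uniformly random extension of $M$ to $V$. Let $c = |\{x \in V(N') \setminus V(N) : N'(x) \neq \star\}|$. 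If $c = 0$, then every $\phi_i$ is automatically an $N'$-instance in every extension, so the number of $N'$-free extensions is zero and we are done. Otherwise, exactly a $(1 - 2^{-c})$ fraction of settings of $M^*$ on $U_i \setminus U$ avoid making $\phi_i$ an $N'$-instance.

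By independence, the number of $N'$-free extensions of $M$ to $V$ is at most
$$2^{|V \setminus W|}(1 - 2^{-c})^m = 2^{2^n(1 - 2^{-k})}\bigl(1 - 2^{-c}\bigr)^{2^{n-2d}}.$$
Using $\log_2(1-x) \leq -x/\ln 2$ for $x \in (0,1)$ together with $c \leq |V(N') \setminus V(N)| \leq 2^d$, the logarithm of this bound is at most
$$2^n(1 - 2^{-k}) - \frac{2^{n - 2d - 2^d}}{\ln 2} = 2^n\bigl(1 - 2^{-k} - \eps\bigr),$$
with $\eps_{\ref{c:countExt}}(d) := 2^{-2d - 2^d}/\ln 2$, which depends only on $d$.

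The main content is the combinatorial packing from \cref{l:extensions}; the remainder is direct counting. I do not anticipate a serious obstacle, though the $c = 0$ corner case (all $\star$'s above $V(N)$) must be singled out since the multiplicative argument is vacuous there.
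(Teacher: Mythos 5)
Your proof is correct and takes essentially the same approach as the paper's: apply \cref{l:extensions} to produce $m=2^{n-2d}$ candidate $d$-dimensional subspaces pairwise intersecting only in $U$, observe that each one independently forbids a constant fraction of the assignments on $V\setminus W$, and multiply. The only point worth making explicit is the reduction to $\dim(N')=d$ (which the paper states as a WLOG, and which you use implicitly when choosing each $\phi_i\colon V(N')\to U_i$ as a linear \emph{isomorphism}); it is justified because any $N'\in\brm{Ext}^{k'}(N)$ with $k'<k$ may be extended by $\star$'s to some $N''\in\brm{Ext}^k(N)$ of dimension exactly $d$, and every $N'$-free matroid is $N''$-free, so the count only increases.
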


\begin{proof} We assume $\dim(N')=d$ without loss of generality.
	Let $U$ be a subspace of $W$ such that $M[U]$ is isomorphic to $N$, 
	let  $U_1,U_2,\ldots,U_{m}$ be the subspaces of $V$ satisfying the conditions of \cref{l:extensions}.

	As at least one extension of $M[U]$ to $U_i$ is isomorphic to $N'$, there are at most $2^{2^{d}-2^{d-k}}-1$ possible restrictions  of an $N'$-free extensions of $M$ to $U_i$ for every $i \in [m]$.
 	Let $U_0=(V \setminus  W) \setminus \cup_{i \in [m]}U_i $ then the total number of $N'$-free extensions of $M$ to $V$ is upper bounded by 
 	\begin{align*}2^{|U_0|}&(2^{2^{d}-2^{d-k}}-1)^{m} \leq 2^{|U_0|}2^{m(2^{d}-2^{d-k})}\s{1-2^{-2^{d}}}^m  \\ &\leq 2^{2^n-2^{n-k} - m \cdot 2^{-2^{d}}} \leq 
 	2^{2^n-2^{n-k} - 2^{n-2d-2^{d}}}\\ &\leq  2^{2^n-2^{n-k} -  2^{n-2^{d+1}}},	 \end{align*}
 	implying that the lemma holds with $\eps = 2^{-2^{d+1}}$.
\end{proof}

\begin{cor}\label{l:Bpattern}
	For every finite collection of $k$-affine patterns $\mc{A}$ and every integer $n_0$ there exists a $k$-affine pattern $B$ with  $\dim(B) \geq n_0$ such that for every $A \in \mc{A}$, every linear injection $\phi_{\star}: A^{-1}(\star) \to B^{-1}(\star)$ extends to an $A$-instance $\phi$ in $B$. 
\end{cor}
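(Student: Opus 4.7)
The plan is to construct $B$ via a probabilistic argument that relies on the packing result \cref{l:extensions}. Fix a large integer $n \geq n_0$, a codimension-$k$ subspace $W \subseteq \bb{F}_2^n$, and let $B$ be the $k$-affine pattern with $V(B) = \bb{F}_2^n \setminus \{0\}$, $B^{-1}(\star) = W \setminus \{0\}$, and $B|_{V(B) \setminus W}$ chosen uniformly at random in $\{0,1\}^{V(B) \setminus W}$. The goal is to show that for $n$ sufficiently large, with positive probability every $(A,\phi_\star)$ admits a valid extension, so that a deterministic $B$ with the desired property exists.

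Fix $A \in \mc{A}$ and a linear injection $\phi_\star\colon A^{-1}(\star) \to B^{-1}(\star)$, and let $U = \phi_\star(A^{-1}(\star))$, a subspace of $W$ of dimension $\dim A - k$. Applying \cref{l:extensions} to $U \subset W \subset V(B)$ with $d = \dim A$ yields $m = 2^{n - 2\dim A}$ subspaces $V_1, \ldots, V_m$ of $V(B)$, each of dimension $\dim A$, with $V_i \cap W = U$ and $V_i \cap V_j = U$ for $i \neq j$. For each $V_i$, fix an arbitrary linear isomorphism $\psi_i\colon V(A) \to V_i$ extending $\phi_\star$; then $\psi_i$ is an $A$-instance in $B$ precisely when $B$ takes the prescribed values $A \circ \psi_i^{-1}$ at each point of $V_i \setminus U$, which is an event of probability $2^{-(2^{\dim A} - 2^{\dim A-k})}$ over the random $B$. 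Crucially, the sets $V_i \setminus U$ are pairwise disjoint subsets of $V(B) \setminus W$, where $B$ is assigned independently, so these events are mutually independent across $i$. Hence the probability that $\phi_\star$ fails to extend is at most
\[
 \bigl(1 - 2^{-(2^{\dim A} - 2^{\dim A-k})}\bigr)^{m} \;\leq\; \exp\!\bigl(-2^{n - 2\dim A - 2^{\dim A}}\bigr),
\]
which is doubly exponentially small in $n$.

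The number of linear injections $\phi_\star\colon A^{-1}(\star) \to W \setminus \{0\}$ is at most $2^{n(\dim A - k)}$, and $\mc{A}$ is finite, so a union bound over all pairs $(A, \phi_\star)$ gives a total failure probability of at most $|\mc{A}| \cdot 2^{nD} \cdot \exp(-2^{n - 2D - 2^{D}})$, where $D = \max_{A \in \mc{A}} \dim A$. This quantity tends to $0$ as $n \to \infty$, so for $n$ chosen large enough a deterministic $B$ with the required extension property exists. The main ingredient is \cref{l:extensions}, which supplies pairwise disjoint (and therefore probabilistically independent) candidate images for $A$-instances above $\phi_\star$; once that is in hand the remainder is a routine first-moment / union bound computation, and I do not anticipate a significant obstacle beyond bookkeeping the parameters $n, D, k$.
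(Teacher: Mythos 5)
Your proof is correct and matches the paper's approach: the same random construction of $B$ (set $B \equiv \star$ on a fixed codimension-$k$ subspace $W$ and choose $B$ uniformly at random on $V \setminus W$), followed by the same union bound over pairs $(A,\phi_\star)$, with the per-pair failure probability controlled via the packing from \cref{l:extensions}. The only difference is organizational: the paper bounds that failure probability by citing \cref{c:countExt}, whereas you inline the underlying argument — the pairwise-disjoint candidate images $V_i \setminus U$ give independent extension events — which is precisely the content of \cref{c:countExt} recast probabilistically, and is in fact slightly more transparent since the event you need (``no $A$-instance extending $\phi_\star$'') is a priori larger than the event (``$B$ is $A$-free'') appearing in the statement of \cref{c:countExt}.
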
	

\begin{proof}
	Let $V$ be a binary projective space with $\dim(V)=n \geq n_0$, let $W \subset V$ be a  subspace of codimension $k$. Let a random   $k$-affine pattern $B$ with $V=V(B)$ be defined by setting $B(x)=\star$ for every $x \in W$ and by choosing $B(x) \in \{0,1\}$ uniformly and independently at random for every  for every $x \in V \setminus  W$. Thus any particular $B$ is selected with probability $$2^{-|V \setminus W|}=2^{-2^n(1 -2^{-k})}.$$
	
	We claim that if $n$ is sufficiently large as a function of $\mc{A}$ then $B$ satisfies the lemma with positive probability. Indeed,  assume without loss of generality that $\dim(A)=d$ for every $A \in \mc{A}$. Then by \cref{c:countExt} the probability that an injection $\phi_{\star}$ as in our lemma statement does not extend to the desired instance in $B$ is at most $2^{-\eps  2^{n}}$
for some $\eps >0$ independent on $n$.
	As there are at most $|\mc{A}| \cdot 2^{nd}$ possible maps $\phi_{\star}$ to consider,  our claim holds by the union bound, as long as we have
	$$|\mc{A}| \cdot 2^{nd -\eps 2^{n}} < 1,$$ 
	for sufficiently large $n$, which we clearly do.
\end{proof}

A matroid $N : V \to \{0,1\}$ is an \emph{evaluation} of a pattern $B$ if $V(B)=V(N)$ and $B^{-1}(i) \subseteq N^{-1}(i)$ for $i \in \{0,1\}$. Thus an evaluation of $B$ replaces its $\star$ values by zeroes and ones.

\begin{lem}\label{l:merge} For all $N_0 \in \mc{M}(k,0)$ and $N_1 \in \mc{M}(k,1)$ there exists a $k$-affine pattern $B$ such that any evaluation of $B$ contains an $N_i$-instance for some $i \in \{0,1\}$. 
\end{lem}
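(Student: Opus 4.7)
The plan is to reduce $N_0$ and $N_1$ to $k$-affine patterns and then combine \cref{l:Bpattern} with the matroid Ramsey theorem.

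First, since $N_0 \in \brm{Ext}^k(\mc{O})$, there is a subspace $W_0 \subseteq V(N_0)$ of codimension at most $k$ with $N_0[W_0] \equiv 0$; by shrinking, we may assume the codimension of $W_0$ is exactly $k$. Define the $k$-affine pattern $A_0$ on $V(N_0)$ by setting $A_0(x) = \star$ for $x \in W_0$ and $A_0(x) = N_0(x)$ otherwise. Symmetrically, obtain a $k$-affine pattern $A_1$ on $V(N_1)$ from $W_1 \subseteq V(N_1)$ of codimension exactly $k$ with $N_1[W_1] \equiv 1$. Let $d_i = \dim(N_i)$, so that $\dim(A_i^{-1}(\star)) = d_i - k$.

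Next, apply \cref{l:Bpattern} to the collection $\mc{A} = \{A_0,A_1\}$ with $n_0$ chosen large enough that $n_0 - k \ge n_{\ref{t:MRamsey}}(\max(d_0-k, d_1-k))$. This yields a $k$-affine pattern $B$ such that, for each $i \in \{0,1\}$, every linear injection $A_i^{-1}(\star) \to B^{-1}(\star)$ extends to an $A_i$-instance in $B$.

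Now let $M$ be any evaluation of $B$ and consider the matroid $M[B^{-1}(\star)]$, whose dimension is at least $n_0 - k$. By the matroid Ramsey theorem (\cref{t:MRamsey}) and the choice of $n_0$, it contains a restriction $M[U]$ with $\dim(U) = \max(d_0-k, d_1-k)$ and $M[U] \equiv j$ for some $j \in \{0,1\}$. Pick a subspace $U' \subseteq U$ of dimension exactly $d_j - k$ and any linear isomorphism $\phi_\star \colon W_j = A_j^{-1}(\star) \to U' \subseteq B^{-1}(\star)$. By the guarantee of \cref{l:Bpattern}, $\phi_\star$ extends to an $A_j$-instance $\phi \colon V(A_j) \to V(B)$.

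It remains to verify that $\phi$ is in fact an $N_j$-instance in $M$. For $x \in V(A_j) \setminus W_j$ we have $A_j(x) = N_j(x) \in \{0,1\}$, and since $\phi(x) \notin B^{-1}(\star)$ the evaluation $M$ agrees with $B$ there, giving $M(\phi(x)) = B(\phi(x)) = A_j(x) = N_j(x)$. For $x \in W_j$ we have $N_j(x) = j$, and $\phi(x) \in U' \subseteq U$, where $M \equiv j$, so again $M(\phi(x)) = j = N_j(x)$. This produces the desired $N_j$-instance in $M$, completing the argument. The conceptual content is entirely in setting up the Ramsey-plus-extension dichotomy; no step presents a genuine obstacle beyond bookkeeping of dimensions.
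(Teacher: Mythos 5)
Your proof is correct and follows essentially the same approach as the paper's: derive $k$-affine patterns $A_0,A_1$ from $N_0,N_1$ by starring out the constant subspaces $W_i$, invoke \cref{l:Bpattern} with $n_0$ chosen so that the Ramsey bound applies inside $B^{-1}(\star)$, and then use the Ramsey theorem plus the extension guarantee to produce the $N_j$-instance. The only cosmetic differences are that you allow $\dim(N_0)\neq\dim(N_1)$ (the paper normalizes them to equal $d$ without loss of generality) and you spell out the verification that $\phi$ is genuinely an $N_j$-instance in $M$, which the paper leaves implicit.
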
	

\begin{proof}
	Assume without loss of generality that $\dim(N_0)=\dim(N_1)=d$.
	For $i \in \{0,1\}$ let $W_i \in \Sub(V(N_i),k)$ be such that $N_i[W_i]\equiv i$, and let $A_i$ be the $k$-affine pattern obtained from $N_i$ by setting $A(x)=\star$ for every $x \in W_i$. 
	Let $n_0 =n_{\ref{t:MRamsey}}(d-k)+k$, and let $B$ be a $k$-affine pattern that satisfies the conclusion of \cref{l:Bpattern} for $n_0$ and $\mc{A}=\{A_1,A_2\}$. 
	
	We claim that $B$ satisfies the lemma. 
	Let $W = B^{-1}(\star)$ and let $M$ be an evaluation of $B$. As $\dim(W) \geq n_0$ there exists a subspace $U$ of $W$ with $\dim(U)=d-k$ such that $M[U] \equiv i$ for some $i \in \{0,1\}$. By the choice of $B$ there exists
	an $A_i$-instance $\phi: V(A_i) \to V(B)$ such that $\phi(W_i) = U$. It follows that $\phi$ is an $N_i$-instance in $M$, implying the claim. 
\end{proof}

\begin{cor}\label{c:kAffine} Let $k \geq 1$ be an integer, and let $\mc{P}$ be a  hereditary matroid property. Then $\chi_c(\mc{P}) < k$ if and only if there exists a $k$-affine pattern $B$ such that every matroid in $\mc{P}$ is $B$-free.
\end{cor}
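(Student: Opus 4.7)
The plan is to derive both directions more or less directly from \cref{l:merge}, with only a brief definitional check in each case.

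For the forward implication, I would begin by assuming $\chi_c(\mc{P}) < k$. By the definition of the critical number, neither $\mc{M}(k,0)$ nor $\mc{M}(k,1)$ is contained in $\mc{P}$, so I can fix matroids $N_0 \in \mc{M}(k,0) \setminus \mc{P}$ and $N_1 \in \mc{M}(k,1) \setminus \mc{P}$. Applying \cref{l:merge} to this pair produces a $k$-affine pattern $B$ with the property that every evaluation of $B$ contains an $N_i$-instance for some $i \in \{0,1\}$. To conclude, I would argue that no matroid $M \in \mc{P}$ admits a $B$-instance: given a hypothetical $B$-instance $\phi : V(B) \to V(M)$, the pullback $M \circ \phi : V(B) \to \{0,1\}$ is an evaluation of $B$, so it contains some $N_i$-instance $\psi$; composing then gives an $N_i$-instance $\phi \circ \psi$ in $M$, and heredity forces $N_i \in \mc{P}$, contradicting its choice.

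For the converse, I would prove the contrapositive: if $\chi_c(\mc{P}) \geq k$, then for every $k$-affine pattern $B$ some matroid in $\mc{P}$ admits a $B$-instance. Given such a $\mc{P}$, fix $i \in \{0,1\}$ with $\mc{M}(k,i) \subseteq \mc{P}$, and for an arbitrary $k$-affine $B$ let $M$ be the evaluation of $B$ obtained by assigning value $i$ to every element of $B^{-1}(\star)$. Since $M \equiv i$ on the codimension-$k$ subspace $B^{-1}(\star)$, the matroid $M$ lies in $\mc{M}(k,i) \subseteq \mc{P}$, and the identity map on $V(B)=V(M)$ is the desired $B$-instance.

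There is no real obstacle at this stage, since the substantive combinatorial content was already dispatched in \cref{l:merge} (which itself relied on \cref{l:Bpattern}, \cref{c:countExt}, and \cref{l:extensions}). The only point that warrants a moment of care is the observation that the pullback $M \circ \phi$ of a $B$-instance is automatically an evaluation of $B$, which is immediate from the defining condition $B(x)=M(\phi(x))$ for $x \in B^{-1}(\{0,1\})$; everything else is a routine composition of linear injections combined with the hereditary closure of $\mc{P}$.
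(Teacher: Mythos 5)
Your proof is correct and follows the same route as the paper's: the hard direction ($\chi_c(\mc{P})<k$ implies the existence of $B$) is obtained by choosing $N_0,N_1\notin\mc{P}$, invoking \cref{l:merge}, and using heredity to rule out $B$-instances, while the easy direction is the same contrapositive via the constant-$i$ evaluation of $B$ lying in $\mc{M}(k,i)\subseteq\mc{P}$. Your unpacking of the pullback $M\circ\phi$ as an evaluation of $B$ and the composition $\phi\circ\psi$ is just a slightly more explicit rendering of the paper's phrase ``a restriction of $M$ is an evaluation of $B$.''
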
	
\begin{proof}
	Suppose first that $\chi_c(\mc{P}) \geq k$. Then $\mc{M}(k,i) \subseteq \mc{P}$ for some $i \in \{0,1\}$. In particular, for every $k$-affine pattern $B$, the evaluation of $B$ obtained by replacing every $\star$ value by $i$ is in $\mc{P}$. Thus  for every such $B$ the property $\mc{P}$  contains a matroid which is not $B$-free, implying the ``if'' direction of the corollary.
	
	For the other direction assume that  $\chi_c(\mc{P}) < k$. Then there exist $N_0 \in \mc{M}(k,0)$ and $N_1 \in \mc{M}(k,1)$ such that $N_0,N_1 \not \in \mc{P}$. By \cref{l:merge} there exists  a $k$-affine pattern $B$ such that any evaluation of $B$ contains an $N_i$-instance for some $i \in \{0,1\}$. 
	
	Suppose for a contradiction that for some $M \in \mc{P}$ there exists a $B$-instance in $M$, i.e. a restriction  of $M$ is an evaluation of $B$. By the choice of $B$, it follows that there exists an $N_i$-instance in $M$ for some $i \in \{0,1\}$, contradicting the choice of $N_i$.    This contradiction implies that every matroid in  $\mc{P}$ is $B$-free, as desired.
\end{proof}

\section{Entropy and proofs of the main results}\label{sec:entropy}

Recall that \emph{binary entropy} is a function $h: [0,1] \to [0,1]$ given by $$h(x) = -x\log_2(x) - (1-x)\log_2(1-x)$$ for $x\in(0,1)$, and $h(0) = h(1) = 0$.
 
 The following is well known, see e.g. \cite[Theorem 3.1]{Galvin14}.
 
\begin{proposition}\label{prop:density_bound}
	$$\binom{n}{k} \leq 2^{n\cdot h(k/n)}$$
\end{proposition}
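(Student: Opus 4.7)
The plan is to deduce the bound from a single well-chosen instance of the binomial theorem, which is the textbook proof of this inequality and avoids any asymptotic machinery. The corner cases $k \in \{0,n\}$ are trivial since both sides equal $1$, so I would dispose of them first and assume $0 < k < n$, which makes $p := k/n \in (0,1)$ a legitimate choice of a probability.

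Next, I would apply the identity
\[
1 = (p + (1-p))^n = \sum_{i=0}^{n} \binom{n}{i} p^{i}(1-p)^{n-i},
\]
and extract the single term with $i = k$ to obtain
\[
\binom{n}{k} \left(\tfrac{k}{n}\right)^{k} \left(\tfrac{n-k}{n}\right)^{n-k} \le 1,
\]
since every summand on the right is nonnegative. Rearranging yields
\[
\binom{n}{k} \le \left(\tfrac{k}{n}\right)^{-k} \left(\tfrac{n-k}{n}\right)^{-(n-k)}.
\]

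Taking $\log_2$ of both sides and factoring out $n$ converts the right-hand side exactly into $n \cdot h(k/n)$ by definition of binary entropy, which finishes the argument. There is no real obstacle here; the only point that deserves a line of care is ensuring continuity at the endpoints $k = 0$ and $k = n$, where the convention $0 \log_2 0 = 0$ matches the stated definition $h(0) = h(1) = 0$ and makes the inequality hold with equality in those degenerate cases.
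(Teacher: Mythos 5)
Your proof is correct and is the standard argument; the paper itself does not prove this fact but simply cites it (Galvin, Theorem 3.1), so your binomial-theorem derivation supplies exactly the expected textbook proof.
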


For a function $f \colon X \to [0,1]$, let $H(f) = \sum_{x\in X} h(f(x))$ be the \emph{binary entropy of $f$}. 

\begin{lem}\label{l:num} Let $X$ be a finite set, let $f \colon X \to [0,1]$ then $$|\mc{M}(f)| \leq 2^{H(f)}.$$
\end{lem}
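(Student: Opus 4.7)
The plan is to exploit the partition structure that the definition of ``$f$-structured'' imposes on any $M \in \mc{M}(f)$. For each value $a \in \brm{Im}(f)$, let $X_a = f^{-1}(a)$, so that $\{X_a\}_{a \in \brm{Im}(f)}$ is a partition of $X$. The condition $\e[M(x) \mid f(x)=a] = a$ is equivalent to saying that $M$ restricted to $X_a$ takes the value $1$ on exactly $a|X_a|$ elements (if this quantity is not an integer, no such $M$ exists, and the contribution is zero, which is still consistent with the bound we aim for).

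Given this, the number of $f$-structured functions is exactly
\[
|\mc{M}(f)| \;=\; \prod_{a \in \brm{Im}(f)} \binom{|X_a|}{a|X_a|}.
\]
First I would apply \cref{prop:density_bound} to each binomial coefficient to obtain $\binom{|X_a|}{a|X_a|} \leq 2^{|X_a|\cdot h(a)}$. Then, taking the product over $a$ and using $h(a) = h(f(x))$ for $x \in X_a$, the exponents telescope into a sum over $X$:
\[
|\mc{M}(f)| \;\leq\; \prod_{a \in \brm{Im}(f)} 2^{|X_a| \, h(a)} \;=\; 2^{\sum_{a \in \brm{Im}(f)} |X_a| h(a)} \;=\; 2^{\sum_{x \in X} h(f(x))} \;=\; 2^{H(f)}.
\]

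There is no real obstacle here; the lemma is essentially a packaging of the standard entropy bound on binomial coefficients, once one recognizes that ``$f$-structured'' forces $M$ to be a conjunction of independent uniform choices of which $a|X_a|$ elements of $X_a$ get the value $1$. The only mild subtlety is the integrality of $a|X_a|$, which I would dispose of by noting that the product formula is either exact (when all $a|X_a|$ are integers) or gives $|\mc{M}(f)|=0$, both of which are dominated by $2^{H(f)}$.
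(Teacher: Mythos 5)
Your proof is correct and follows essentially the same route as the paper: partition $X$ into level sets $f^{-1}(a)$, count $f$-structured functions by a product of binomial coefficients, and bound each factor via \cref{prop:density_bound}. The only addition is your explicit handling of the integrality of $a|X_a|$, which the paper leaves implicit.
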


\begin{proof}
For every $f$-structured function  $M :X \to \{0,1\}$ and every $a \in \brm{Im}(f)$ we have $|M^{-1}(1) \cap f^{-1}(a)| =  a |f^{-1}(a)|.$ The number of such functions $M$ is thus upper bounded by
	$$\prod_{a \in \brm{Im}(f)} \binom{|f^{-1}(a)|}{a |f^{-1}(a)|} \leq \prod_{a \in \brm{Im}(f)} 2^{h(a) |f^{-1}(a)|} = 2^{H(f)},$$
	where the inequality holds by \cref{prop:density_bound}.
\end{proof}

Next, in the first of our main technical lemmas we apply stability and removal theorems for -Burton geometries to show that functions $f:V \to [0,1]$, which have low density of some $(k+1)$-affine pattern, but high entropy, admit a low entropy restriction to some $k$-dimensional subspace.

\begin{lem}\label{l:SubEntropy}
	For all $\eps,d > 0$ there exists $\delta=\delta_{\ref{l:SubEntropy}}(\eps,d), D=D_{\ref{l:SubEntropy}}(\eps,d) > 0$ satisfying the following. Let $V$ be a binary projective space with $\dim(V) \geq D$, let $0 \leq k < d$ and let $f:V \to [0,1]$
	such that  $t(A,f) \leq \delta$ for some $(k+1)$-affine pattern $A$ with $\dim(A) \leq d$ and $H(f) \geq (1-1/2^k-\delta)|V|$. Then there exists $W \in \Sub(k,V)$ such that
	$$\log |\, \{M[W] \:|\: M \in \mc{M}(f)\} \,| \leq \eps |V|.$$
\end{lem}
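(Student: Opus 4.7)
Proof plan:

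The plan is to use the Bose-Burton stability and removal theorems to locate a codim-$k$ subspace $W \subseteq V$ on which $f$ is essentially $\{0,1\}$-valued, and then to bound the number of restrictions $M[W]$ for $M \in \mc{M}(f)$ via the level-set structure of $f$-structured matroids.

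For an auxiliary threshold $\eta > 0$, let $T_\eta := \{x \in V : \eta \leq f(x) \leq 1 - \eta\}$. The hypothesis $H(f) \geq (1 - 2^{-k} - \delta)|V|$ combined with the pointwise bounds $h(f(x)) \leq 1$ on $T_\eta$ and $h(f(x)) \leq h(\eta)$ elsewhere gives $|T_\eta| \geq (1 - 2^{-k} - O(\delta + h(\eta)))|V|$. Meanwhile, any linear injection $\phi\colon V(A) \to V$ with $\phi(V(A) \setminus A^{-1}(\star)) \subseteq T_\eta$ has weight
\[P_\phi^A := \prod_{A(x)=1} f(\phi(x)) \prod_{A(x)=0}\bigl(1 - f(\phi(x))\bigr) \geq \eta^{2^d},\]
so $t(BB_{k+1,\dim(A)}, 1_{T_\eta}) \leq t(A, f)/\eta^{2^d} \leq \delta/\eta^{2^d}$. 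Cascade the parameters: choose $\eps_{\mathrm{stab}} \ll \eps$, set $\delta_{\mathrm{stab}} := \delta_{\ref{t:stability}}(d, \eps_{\mathrm{stab}})$, choose $\eps_{\mathrm{rem}} \ll \delta_{\mathrm{stab}}$, set $\delta_{\mathrm{rem}} := \delta_{\ref{t:removal}}(d, \eps_{\mathrm{rem}})$, pick $\eta$ small with $h(\eta) \leq \min(\delta_{\mathrm{stab}}, \eps\cdot 2^{k-2})$, and finally take $\delta \leq \delta_{\mathrm{rem}} \eta^{2^d}$. Applying \cref{t:removal} to $1_{T_\eta}$ gives a $BB_{k+1,\dim(A)}$-free set $T' \subseteq T_\eta$ with $|T_\eta \setminus T'| \leq \eps_{\mathrm{rem}}|V|$; the lower bound on $|T_\eta|$ then guarantees $|T'| \geq (1 - 2^{-k} - \delta_{\mathrm{stab}})|V|$, and \cref{t:stability} applied to $1_{T'}$ yields a codim-$k$ subspace $W$ with $|T' \cap W| \leq \eps_{\mathrm{stab}}|V|$, hence $|T_\eta \cap W| \leq (\eps_{\mathrm{rem}} + \eps_{\mathrm{stab}})|V|$.

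To convert this into the required bound, split $H(f|_W) = \sum_{x \in W} h(f(x))$ across $T_\eta$ and its complement to obtain $H(f|_W) \leq |T_\eta \cap W| + h(\eta)|W|\leq \eps|V|/2$. Since an $f$-structured matroid is determined independently on each level set $f^{-1}(a)$ by a subset of size $a|f^{-1}(a)|$, whose intersection with $W \cap f^{-1}(a)$ is forced to lie in a narrow range around $a|f^{-1}(a)\cap W|$, a standard Stirling-type estimate together with \cref{prop:density_bound} bounds the number of distinct such intersections by $(|W|+1)\cdot 2^{h(a)|f^{-1}(a)\cap W|}$. Multiplying over the $L$ nontrivial levels of $f$ yields $\log|\{M[W]\}| \leq H(f|_W) + L\log(|W|+1) \leq \eps|V|$, provided $L\log|W| = o(|V|)$. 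The main technical obstacle is controlling this last step: in the intended applications $f$ arises from a bounded-complexity polynomial factor with $L$ bounded by a constant (making the correction trivially negligible), but for a general $f$ one would preface the argument by a rounding step that replaces $f$ by an approximation having only boundedly many distinct values, while verifying that this distorts $t(A,f)$ and $H(f)$ by only $o(1)$.
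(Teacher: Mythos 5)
Your overall framework—applying the removal and stability theorems for Bose--Burton geometries to the thresholded set $T_\eta$ and locating a codimension-$k$ subspace $W$ with small overlap—matches the paper's proof almost step for step; in fact your $T_\eta$ is exactly the paper's set $X = \{x : \min(f(x),1-f(x)) \geq \delta_2\}$, and the chain $t(BB_{k+1,d},1_{T_\eta}) \leq \delta/\eta^{2^d}$, removal to $T'$, stability on $T'$ to find $W$ is identical in structure. The gap is in the final counting step. You assert that for $M \in \mc{M}(f)$ the intersection of $M^{-1}(1)$ with $W \cap f^{-1}(a)$ is ``forced to lie in a narrow range around $a|f^{-1}(a)\cap W|$.'' This is false for $k \geq 1$ (the only case that matters in the application, \cref{l:general}). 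The constraint defining $\mc{M}(f)$ fixes $|M^{-1}(1) \cap f^{-1}(a)|$ over the \emph{whole} level set, and places no constraint on how that set distributes across the codimension-$k$ subspace $W$: the size of $M^{-1}(1) \cap W \cap f^{-1}(a)$ can be anything between $\max\{0,\,a|f^{-1}(a)|-|f^{-1}(a)\setminus W|\}$ and $\min\{a|f^{-1}(a)|,\,|f^{-1}(a)\cap W|\}$. When $a$ is small but $a|f^{-1}(a)|$ is comparable to $|f^{-1}(a)\cap W|$ (perfectly consistent with the hypotheses), this range is essentially unrestricted, so the number of distinct restrictions to $W \cap f^{-1}(a)$ can be close to $2^{|f^{-1}(a)\cap W|}$, exponentially larger than your claimed $(|W|+1)\cdot 2^{h(a)|f^{-1}(a)\cap W|}$. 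The ``main technical obstacle'' you flag at the end (the number of levels $L$, to be handled by rounding) is a red herring; controlling $L$ would not repair this per-level bound.

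The paper avoids the issue by never counting restrictions to $W$ level by level. It splits $W$ into $W \cap X$ and $W \setminus X$: the first has small measure, so the trivial bound $2^{|W\cap X|}$ suffices, and the second satisfies $W \setminus X \subseteq X^c$, where $X^c$ is a \emph{union of full level sets of $f$} (since membership in $X$ depends only on $f(x)$). Hence for $M \in \mc{M}(f)$ the restriction $M[X^c]$ is genuinely $f|_{X^c}$-structured, so \cref{l:num} applies to give $\log|\{M[X^c]\}| \leq H(f|_{X^c}) \leq h(\delta_2)|V|$, and restrictions to the subset $W \setminus X$ inherit the same bound. To repair your argument, replace the direct count of $\{M[W]\}$ by this decomposition: bound restrictions to $W\cap T_\eta$ trivially by $2^{|W\cap T_\eta|}$, and bound restrictions to $W\setminus T_\eta$ via \cref{l:num} applied to $f|_{T_\eta^c}$, using that $T_\eta^c$ is a union of level sets of $f$.
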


\begin{proof}
Let $\delta_0=\delta_{\ref{t:stability}}(d,\eps/3)$	,  $\eps_1 = \frac{1}{3}\min \{\delta_0,\eps\}$, and $\delta_1 = \delta_{\ref{t:removal}}(d,\eps_1)$. Let $D = D_{\ref{t:stability}}(d,\eps/3).$
Let $\delta_2 > 0$ be chosen so that $h(\delta_2) < \eps_1 $.
Finally, let $\delta$ be chosen so that $$\delta + h(\delta_2)+\eps_1 \leq \delta_0 \qquad \text{and}  \qquad \delta \cdot \delta^{-2^d}_2 \leq \delta_1.$$
We show that $\delta$ and $D$ satisfy the lemma.
	
We assume without loss of generality that $\dim(A) = d$.	 
Let $\hat{A}$ be the pattern obtained from $A$ by changing zero values to one.\footnote{Formally, $\hat{A}$ is the unique pattern with $V(\hat{A})=V(A)$ defined by $\hat{A}(x)=\star$ for $x \in A^{-1}(\star)$, and $\hat{A}(x) = 1$, otherwise.} Note that $\hat{A}$ is isomorphic to $BB_{k+1,d}$.
Let $\hat{f} (x)  = \min \{f(x),1-f(x)\}$ for $x \in V$. Then
\begin{align*}
t(BB_{k+1,d},\hat{f}) &= \mathlarger{\mathop{\e}}_{\phi} \left[\prod_{x \in \hat{A}^{-1}(1)} \hat{f}(\phi(v)) \right] =  \mathlarger{\mathop{\e}}_{\phi} \left[\prod_{x \in A^{-1}(1)} \hat{f}(\phi(v)) \prod_{x \in A^{-1}(0)} \hat{f}(\phi(v)) \right] \\ &\leq   \mathlarger{\mathop{\e}}_{\phi} \left[\prod_{x \in A^{-1}(1)} f(\phi(v)) \prod_{x \in A^{-1}(0)} (1-f(\phi(v))) \right] = t(A,f) \leq \delta.
\end{align*}

Let $$X = \{ x \in X : \hat{f}(x)  \geq \delta_2\},$$  and let  $M$ be a matroid with $V(M)=V$ and $M^{-1}(1)=X$. 
Then $M(x) \leq \delta^{-1}_2\hat{f}(x)$ for every $x \in V$, implying
$$t(BB_{k+1,d},M) \leq \delta^{-2^d}_2\cdot t(BB_{k+1,d},\hat{f}) \leq  \delta^{-2^d}_2 \delta \leq \delta_1.$$
Thus by the choice of $\delta_1$ there exists a $BB_{k+1,d}$-free matroid $M_0$ with $V(M_0)=V$ such that $M_0^{-1}(1) \subseteq X$ and $|M_0^{-1}(1)| \geq |X|  - \eps_1 |V|$.  
Moreover, $$(1-1/2^k-\delta)|V| \leq H(f)=H(\hat{f}) \leq h(\delta_2)|V| + |X|,$$
implying $$|M_0^{-1}(1)| \geq \s{1-\frac{1}{2^k}-\delta-h(\delta_2) -\eps_1}|V| \geq \s{1-\frac{1}{2^k}-\delta_0}|V|.$$
By the choice of $\delta_0$ there exists $W \in \Sub(k,V)$ such that $|M_0^{-1}(1) \cap W| \leq \eps|V|/3$. Thus $|W \cap X|  \leq (\eps/3 + \eps_1)|V|$. 

By \cref{l:num} we have
$$\log |\{M[W - X] | M \in \mc{M}(f)\}| \leq H(f|_{W - X}) \leq h(\delta_2)|V|,$$
implying 
 \begin{align*} \log &|\{M[W] | M \in \mc{M}(f)\}| \leq \log |\{M[W - X] | M \in \mc{M}(f)\}| + \log |\{M[W \cap X] | M \in \mc{M}(f)\}| \\&\leq h(\delta_2)|V|+|W \cap X| \leq   (\eps/3+\eps_1+h(\delta_2))|V| \leq \eps|V|,\end{align*}
as desired.
\end{proof}

We are now ready to derive  \cref{t:thin} from \cref{t:bd},  the $k=1$ case of \cref{c:kAffine}, and the $k=0$ case of \cref{l:SubEntropy}. 

\begin{proof}[Proof of \cref{t:thin}.]
	
	\textbf{(a) $\Rightarrow$ (b):} This application is an immediate consequence of $k=1$ case of \cref{c:kAffine}.
	
	 \textbf{(b) $\Rightarrow$ (c):} Let $A$ be an affine pattern such that every matroid in $\mc{T}$ is $A$-free. Let $\eps > 0$ be arbitrary, let $d=\dim(A)$,  $\delta=\delta_{\ref{l:SubEntropy}}(\eps,d)$, $D=D_{\ref{l:SubEntropy}}(\eps,d)$ and  $C = C_{\ref{t:bd}}(\delta,d)$. 
	 
	 Let $V = PG(n-1,2)$ for $n \geq D$. By the choice of $C$, there exists a set of functions $f_1,f_2,\ldots,f_m: V \to [0,1]$ such that\begin{itemize}
	 	\item $m \leq 2^{n^C}$,
	 	\item $t(A,f_i) \leq \delta$ for every $i \in [m]$,
	 	\item $\mc{T}^n \subseteq \cup_{i \in [m]} \mc{M}(f_i)$. 
	 \end{itemize} 	 
	 By \cref{l:SubEntropy} applied with $k=0$, we have $\log|\mc{M}(f_i)| \leq \eps 2^n$ for every $i \in [m]$. Thus
	 $$h(n,\mc{T}) \leq \log\s{\sum_{i \in [m]}|\mc{M}(f_i)|} \leq  \eps 2^n + \log m  \leq \eps 2^n + n^c.$$
	 As the above inequality holds for every choice of $\eps > 0$ and every $n \geq D_{\ref{l:SubEntropy}}(\eps,d)$, we have $h(n,\mc{T}) = o(2^n)$, as desired.
	 
	\textbf{(c) $\Rightarrow$ (a):}  If (a) does not hold, i.e. $\mc{T}$ is not thin, then $\mc{M}(1,i) \subseteq \mc{T}$ for some $i \in \{0,1\}$. Therefore, $h(n,\mc{T}) \geq 2^{n-1}$ for every $n \geq 1$ and (c) does not hold.
\end{proof}

The next result is our final technical lemma from which the remaining Theorems~\ref{t:main} and~\ref{t:local} readily follow.

\begin{lem}\label{l:general} Let $k \geq 1$ be an integer. Let $\mc{P}$ be a hereditary matroid property with $\chi_c(\mc{P})=k$. Let $\mc{T}$ be a locally characterized matroid  property such that
	$\brm{Core}^k(\mc{P}) \subseteq \mc{T}$. Then there exists $\delta, D > 0$ such that  $$h(n, \mc{P} - \brm{Ext}^k(\mc{T})) \leq \s{1 -\frac{1}{2^k}-\delta}2^n$$
	for all $n \geq D$.
\end{lem}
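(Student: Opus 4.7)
The approach mimics the proof of \cref{t:thin} (b)$\Rightarrow$(c), now saving an additional $\delta\cdot 2^n$ bits from the condition $M\notin\brm{Ext}^k(\mc{T})$. By \cref{c:kAffine}, since $\chi_c(\mc{P})=k$, there is a $(k+1)$-affine pattern $B$ such that every matroid in $\mc{P}$ is $B$-free. Using the $0\leftrightarrow 1$ symmetry, I assume without loss of generality that $\mc{M}(k,0)\subseteq\mc{P}$, so $\mc{O}\subseteq\brm{Core}^k(\mc{P})\subseteq\mc{T}$; writing $\mc{T}=\brm{Forb}(\mc{N})$ for a finite $\mc{N}$, every $N\in\mc{N}$ satisfies $|N^{-1}(1)|\ge 1$ because $N\notin\mc{O}\subseteq\mc{T}$. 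I then apply \cref{t:bd} with $A=B$ and a small precision $\delta_0>0$ (to be chosen at the end) to cover the $B$-free matroids of dimension $n$ by $m\le 2^{Cn}$ containers $\mc{M}(f_i)$ with $t(B,f_i)\le \delta_0$.

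For each container $\mc{M}(f_i)$, I bound $|\mc{M}(f_i)\cap(\mc{P}-\brm{Ext}^k(\mc{T}))^n|$ by splitting on $H(f_i)$. If $H(f_i)\le (1-2^{-k}-\delta_1)|V|$ for a fixed $\delta_1>0$, then \cref{l:num} directly yields $|\mc{M}(f_i)|\le 2^{(1-2^{-k}-\delta_1)|V|}$, as desired. Otherwise, applying \cref{l:SubEntropy} produces a codimension-$k$ subspace $W$ with $|\{M[W]:M\in\mc{M}(f_i)\}|\le 2^{\eps_0|V|}$, and its proof shows further that $f_i$ is almost binary on $W$. Choosing $\delta_0$ small enough (depending on $d=\dim B$) rules out $f_i|_W$ being close to $1$ on any substantial portion of $W$, since such a shape contains too many $B$-instances to satisfy $t(B,f_i)\le \delta_0$; hence $f_i|_W$ is close to $0$ on all but $O(\eps_0|V|)$ points of $W$.

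It remains to bound the number of $M\in\mc{M}(f_i)$ with $M\notin\brm{Ext}^k(\mc{T})$ in this second case. Each such $M$ has $M[W]\notin\mc{T}$, so $M[W]$ contains an $N$-instance for some $N\in\mc{N}$, requiring prescribed $1$s at some $\phi(V(N))\subseteq W$. A hypergeometric calculation bounds, for each $N$ and each linear injection $\phi:V(N)\to W$, the fraction of $M\in\mc{M}(f_i)$ with $M(\phi(x))=N(x)$ for all $x$ by $\prod_x f_i(\phi(x))^{N(x)}(1-f_i(\phi(x)))^{1-N(x)}$, which is of order $\delta_2$ whenever $\phi$ maps a $1$-position of $N$ into the near-zero region of $W$. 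The main obstacle is closing the gap between this per-instance saving and the $|V|^{\dim N}$ choices of $\phi$: the naive union bound only gives a polynomial-in-$2^n$ saving, insufficient to beat $|\mc{M}(f_i)|\le 2^{(1-2^{-k}+\eps_0)|V|}$. I plan to bridge this by additionally invoking that $\mc{P}$ is $M''_N$-free for each $N\in\mc{N}$, where $M''_N\in\brm{Ext}^k(N)\setminus\mc{P}$ is obtained from $\brm{Core}^k(\mc{P})\subseteq\mc{T}$: each $N$-instance in $M[W]$ then forbids matching extensions to $V\setminus W$, cutting the effective entropy on $V\setminus W$ below $(1-2^{-k})|V|$ by a constant fraction and providing the required $2^{-\delta|V|}$ factor. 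Summing over the $m\le 2^{Cn}$ containers and absorbing the $Cn$ overhead into $(\delta_1-\delta)|V|$ for $n$ sufficiently large then completes the proof.
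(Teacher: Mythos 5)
Your skeleton matches the paper's proof: get a $(k+1)$-affine pattern $B$ from \cref{c:kAffine}, cover $\mc{P}^n$ by containers $\mc{M}(f_i)$ with $t(B,f_i)\le\delta'$ via \cref{t:bd}, and split on $H(f_i)$, handling the low-entropy case with \cref{l:num} and the high-entropy case with \cref{l:SubEntropy} followed by the local characterization of $\mc{T}$. The difficulty is in the last step, and there you stop at a plan rather than a proof. You correctly diagnose that the hypergeometric/union-bound attempt inside $\mc{M}(f_i)$ only yields a polynomial-in-$2^n$ saving, and you correctly identify the right source of the exponential saving: since some $N'\in\mc{N}$ occurs in $M[W]$ and $N'\notin\brm{Core}^k(\mc{P})$, there is an $N\in\brm{Ext}^k(N')$ with $N\notin\mc{P}$, and every matroid in $\mc{P}$ is $N$-free, so the $N'$-instance in $M[W]$ restricts the admissible extensions to $V\setminus W$. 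But the assertion that this "cuts the effective entropy on $V\setminus W$ by a constant fraction" is precisely the content that must be proved, and you give no argument for it. A single $N'$-instance only constrains $O(1)$ cosets; to get a factor $2^{-\Omega(2^n)}$ you need $\Omega(2^n)$ essentially disjoint subspaces of $V$ that each meet $W$ in the image of the $N'$-instance, each of which independently forbids a positive fraction of extensions. That is the packing lemma \cref{l:extensions} and the resulting counting bound \cref{c:countExt}, neither of which your proposal invokes or reconstructs; this is the genuine missing ingredient.

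Two smaller remarks. First, the paper's route is cleaner than what you sketch: once $M'=M[W]\in\mc{M}'$ with $|\mc{M}'|\le 2^{\eps 2^n}$ is fixed, one simply bounds $|\mc{R}^n\cap\brm{Ext}^k(M')|\le|\mc{P}^n\cap\brm{Ext}^k(M')|$ by the number of $N$-free extensions of $M'$ to $V$, directly via \cref{c:countExt}; there is no need to stay inside $\mc{M}(f_i)$, and consequently no need for your WLOG $\mc{M}(k,0)\subseteq\mc{P}$, the observation that every $N\in\mc{N}$ has a $1$, or the claim that $f_i|_W$ is nearly zero. Second, your claim that the proof of \cref{l:SubEntropy} "shows further that $f_i$ is almost binary on $W$" is an extra extraction you would need to justify separately; the statement of \cref{l:SubEntropy} only gives the bound on $|\{M[W]:M\in\mc{M}(f_i)\}|$, which is all the paper uses.
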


\begin{proof}
By \cref{c:kAffine} there exists a $(k+1)$-affine pattern $B$ such that every matroid in $\mc{P}$ is $B$-free.	
Let $\mc{N}$ be a finite set of matroids such that $\mc{T}=\brm{Forb}(\mc{N})$. Let $d$ be such that $\dim(B) \leq d$ and  $\dim(N) \leq d-k$ for every $N \in \mc{N}$.  Let $\eps = \frac{1}{2}\eps_{\ref{c:countExt}}(d)$,  $\delta'=\min\{\eps,\delta_{\ref{l:SubEntropy}}(\eps,d)\}, D' =D_{\ref{l:SubEntropy}}(\eps,d)$, and $C= C_{\ref{t:bd}}(\delta',d)$. 

Let $n \geq D'$ be an integer, let $V=PG(n-1,2)$. By the choice of $C$, there exists functions $f_1,f_2,\ldots,f_m: V \to [0,1]$ such that\begin{itemize}
	\item $m \leq 2^{n^C}$,
	\item $t(B,f_i) \leq \delta'$ for every $i \in [m]$,
	\item $\mc{P}^n \subseteq \cup_{i \in [m]} \mc{M}(f_i)$. 
\end{itemize} 	
Let $\mc{R}=\mc{P} - \brm{Ext}^k(\mc{T})$, then \begin{equation}\label{e:hnr} h(n, \mc{R}) \leq \log\s{\sum_{i=1}^m |\mc{R}^n \cap  \mc{M}(f_i)|} \leq n^C + \max_f\log|\mc{R}^n(f)\cap  \mc{M}(f)|,\end{equation}
where the maximum in the above inequality is taken over functions $f: V \to [0,1]$ such that $t(B,f) \leq \delta'$. 
It remains  to upper bound $\log|\mc{R}^n\cap  \mc{M}(f)|$ for such functions $f$.
If  $H(f) \leq  (1-1/2^k-\delta')2^n$  
then
 $$\log|\mc{R}^n(f)\cap  \mc{M}(f)| \leq \log |\mc{M}(f)| \leq  \s{1 -\frac{1}{2^k}-\delta'}2^n$$ by \cref{l:num}.
 
 Assume now that $H(f) \geq  (1-1/2^k-\delta')2^n$. Then  
 by \cref{l:SubEntropy} there exists $W \in \Sub(k,V)$ such that \begin{equation}\label{e:mprime}\log|\mc{M}'| \leq \eps 2^{n},\end{equation}  where $$\mc{M}'= \{M[W] | M \in \mc{R}^n\cap  \mc{M}(f) \}.$$
Fix $M' \in \mc{M}'$ then $M'=M[W]$ for some $M \in \mc{R}^n$. As  $M \not \in \brm{Ext}^k(\mc{T})$ we have $M' \not \in \mc{T}$, thus there exists an $N'$-instance in $M'$ for some $N' \in \mc{N}$. As $N' \not \in \brm{Core}^k(\mc{P})$ there exists a matroid $N  \in \brm{Ext}^k(N')$ such that $N \not \in \mc{P}$. As $\dim N \leq d$ by  the choice of $\eps$ we have
$$|\mc{R}^n \cap \brm{Ext}^k(M')| \leq |\mc{P}^n \cap \brm{Ext}^k(M')| \leq 2^{2^n(1-2^{-k} - 2\eps)}. $$ 
As $\mc{R}^n \cap \mc{M}(f) \subseteq \cup_{M' \in \mc{M}'}(\mc{R}^n \cap \brm{Ext}^k(M'))$, it follows that
 $$\log|\mc{R}^n(f)\cap  \mc{M}(f)| \leq \log |\mc{M}'| + 2^n-2^{n-k} - \eps 2^{n+1} \leq  \s{1 -\frac{1}{2^k}-\delta'}2^n,$$
 i.e. the same bound holds as in the previous case.

By \eqref{e:hnr} we have
$$ h(n, \mc{R})  \leq n^C + \s{1 -\frac{1}{2^k}-\delta'}2^n$$
for $n \geq D'$. Thus the lemma holds for any $\delta < \delta'$ and $D$ sufficiently large as a function of $d$ and $\delta$.
\end{proof}	

\begin{proof}[Proof of \cref{t:main}]
As $\chi_c(\mc{P})=k$ there exist $N_0 \in \mc{M}(k,0), N_1 \in \mc{M}(k,1)$ such that $N_0,N_1 \not \in \mc{P}$. Let $\mc{P}'=\brm{Forb}(\{N_0,N_1\})$ be the set of all matroids which are $N_0$-free and $N_1$-free. Then   $\mc{P} \subseteq \mc{P}'$ and $\chi_c(\mc{P}') \leq k$, implying $\chi_c(\mc{P}')=k$. Thus $\mc{T} = \brm{Core}^k(\mc{P}')$ is non-trivial, and $\mc{T}$ is locally characterized, as $\mc{P}'$ is locally characterized.   
As $h(n ,\mc{P}) \geq \s{1 - \frac{1}{2^k}}2^n$ for $n \geq k$, by \cref{l:general} we have
$$ h(n,\mc{P} - \brm{Ext}^k(\mc{T})) \le h(n ,\mc{P})  - \Omega(2^n),$$
implying (in a very strong sense) that almost every matroid in $\mc{P}$ lies in $ \brm{Ext}^k(\mc{T})$.	
\end{proof}	

\begin{proof}[Proof of \cref{t:local}]
	As in the  proof of \cref{t:main} by \cref{l:general} we have
$$ h(n,\mc{P} - \brm{Ext}^k(\mc{T})) \le h(n ,\mc{P})  - \Omega(2^n),$$
	and so almost every matroid in $\mc{P}$ lies in $ \brm{Ext}^k(\mc{T})$.	
\end{proof}	

\bibliography{snorin}

\begin{thebibliography}{10}

\bibitem{Alekseev92}
V.~E. Alekseev.
\newblock Range of values of entropy of hereditary classes of graphs.
\newblock {\em Diskret. Mat.}, 4(2):148--157, 1992.

\bibitem{ABBM11}
Noga Alon, J\'{o}zsef Balogh, B\'{e}la Bollob\'{a}s, and Robert Morris.
\newblock The structure of almost all graphs in a hereditary property.
\newblock {\em J. Combin. Theory Ser. B}, 101(2):85--110, 2011.

\bibitem{AloSha08}
Noga Alon and Asaf Shapira.
\newblock A characterization of the (natural) graph properties testable with
  one-sided error.
\newblock {\em SIAM Journal on Computing}, 37(6):1703--1727, 2008.

\bibitem{BBS11}
J\'{o}zsef Balogh, B\'{e}la Bollob\'{a}s, and Mikl\'{o}s Simonovits.
\newblock The fine structure of octahedron-free graphs.
\newblock {\em J. Combin. Theory Ser. B}, 101(2):67--84, 2011.

\bibitem{BFHHL13}
Arnab Bhattacharyya, Eldar Fischer, Hamed Hatami, Pooya Hatami, and Shachar
  Lovett.
\newblock Every locally characterized affine-invariant property is testable.
\newblock In {\em Proceedings of the forty-fifth annual ACM symposium on Theory
  of computing}, pages 429--436, 2013.

\bibitem{BFL12}
Arnab Bhattacharyya, Eldar Fischer, and Shachar Lovett.
\newblock Testing low complexity affine-invariant properties.
\newblock In {\em Proceedings of the {T}wenty-{F}ourth {A}nnual {ACM}-{SIAM}
  {S}ymposium on {D}iscrete {A}lgorithms}, pages 1337--1355. SIAM,
  Philadelphia, PA, 2012.

\bibitem{BGS15}
Arnab Bhattacharyya, Elena Grigorescu, and Asaf Shapira.
\newblock A unified framework for testing linear-invariant properties.
\newblock {\em Random Structures \& Algorithms}, 46(2):232--260, 2015.

\bibitem{BolTho97}
B{\'e}la Bollob{\'a}s and Andrew Thomason.
\newblock {\em Hereditary and Monotone Properties of Graphs}, pages 70--78.
\newblock Springer Berlin Heidelberg, Berlin, Heidelberg, 1997.

\bibitem{Campbell16}
Rutger Campbell.
\newblock Dense {$PG(n-1,2)$}-free binary matroids, 2016.
\newblock arXiv:1605.06546.

\bibitem{EKR76}
P.~Erd\H{o}s, D.~J. Kleitman, and B.~L. Rothschild.
\newblock Asymptotic enumeration of {$K_{n}$}-free graphs.
\newblock pages 19--27. Atti dei Convegni Lincei, No. 17, 1976.

\bibitem{Galvin14}
David Galvin.
\newblock Three tutorial lectures on entropy and counting.
\newblock 2014.
\newblock arXiv:1406.7872.

\bibitem{GeeNel16}
Jim Geelen and Peter Nelson.
\newblock The critical number of dense triangle-free binary matroids.
\newblock {\em J. Combin. Theory Ser. B}, 116:238--249, 2016.

\bibitem{Gowers10}
W.~T. Gowers.
\newblock Decompositions, approximate structure, transference, and the
  {H}ahn-{B}anach theorem.
\newblock {\em Bull. Lond. Math. Soc.}, 42(4):573--606, 2010.

\bibitem{GowWol10}
W.~T. Gowers and J.~Wolf.
\newblock The true complexity of a system of linear equations.
\newblock {\em Proc. Lond. Math. Soc. (3)}, 100(1):155--176, 2010.

\bibitem{GraRot69}
R.~L. Graham and B.~L. Rothschild.
\newblock Ramsey's theorem for {$n$}-dimensional arrays.
\newblock {\em Bull. Amer. Math. Soc.}, 75:418--422, 1969.

\bibitem{Green05}
B.~Green.
\newblock A {S}zemer\'{e}di-type regularity lemma in abelian groups, with
  applications.
\newblock {\em Geom. Funct. Anal.}, 15(2):340--376, 2005.

\bibitem{GreSan15}
Ben Green and Tom Sanders.
\newblock Fourier uniformity on subspaces.
\newblock 2015.
\newblock arXiv:1510.08739.

\bibitem{HHL19}
Hamed Hatami, Pooya Hatami, and Shachar Lovett.
\newblock Higher-order fourier analysis and applications.
\newblock {\em Foundations and Trends in Theoretical Computer Science},
  13(4):247--448, 2019.

\bibitem{KPR87}
Ph.~G. Kolaitis, H.~J. Pr\"{o}mel, and B.~L. Rothschild.
\newblock {$K_{l+1}$}-free graphs: asymptotic structure and a {$0$}-{$1$} law.
\newblock {\em Trans. Amer. Math. Soc.}, 303(2):637--671, 1987.

\bibitem{LLNN20}
Hong Liu, Sammy Luo, Peter Nelson, and Kazuhiro Nomoto.
\newblock Stability and exact tur{\'{a}}n numbers for matroids.
\newblock {\em Journal of Combinatorial Theory, Series B}, 143:29--41, 2020.

\bibitem{Luo19}
Sammy Luo.
\newblock A counting lemma for binary matroids and applications to extremal
  problems.
\newblock {\em Journal of Combinatorial Theory, Series B}, 136:329--359, 2019.

\bibitem{NorYud192}
Sergey Norin and Yelena Yuditsky.
\newblock Typical structure of hereditary graph families. {II}. {E}xotic
  examples.
\newblock 2020.
\newblock arXiv:2007.00688.

\bibitem{PS91}
Hans~J\"{u}rgen Pr\"{o}mel and Angelika Steger.
\newblock Excluding induced subgraphs: quadrilaterals.
\newblock {\em Random Structures Algorithms}, 2(1):55--71, 1991.

\bibitem{Szemeredi75}
Endre Szemer{\'e}di.
\newblock Regular partitions of graphs.
\newblock Technical report, Stanford Univ Calif Dept of Computer Science, 1975.

\bibitem{Tao12}
Terence Tao.
\newblock {\em Higher order {F}ourier analysis}, volume 142 of {\em Graduate
  Studies in Mathematics}.
\newblock American Mathematical Society, Providence, RI, 2012.

\bibitem{TaoZie12}
Terence Tao and Tamar Ziegler.
\newblock The inverse conjecture for the {G}owers norm over finite fields in
  low characteristic.
\newblock {\em Ann. Comb.}, 16(1):121--188, 2012.

\bibitem{TidZha19}
Jonathan Tidor and Yufei Zhao.
\newblock Testing linear-invariant properties.
\newblock 2019.
\newblock arXiv:1911.06793.

\end{thebibliography}
\bibliographystyle{plain}
\end{document}